\def\deg{{\hbox{\rm deg}}}
\def\sgn{{\hbox{\rm sgn}}}
\def\id{{\hbox{\rm id}}}
\def\ZZ{{\mathbb Z}}
\def\span{{\hbox{\rm Span}}}
\def\a{\alpha}
\def\cg{{\mathcal G}}
\def\cq{{\mathcal Q}}
\def\ch{{\mathcal H}}
\def\ep{\epsilon}
\newfont{\df}{eufm10}
\def\a{\alpha}
\def\CC{{\mathbb C}}
\def\End{{\bf End}}
\title[vertex representations for $A_{2l}^{(2)}$]{\bf
Vertex representations for twisted \\ affine Lie algebra of type
$A_{2l}^{(2)}$}
\thanks{$^\star$L.M., supported by the Science Foundation of Jiangsu University
(Grant No.07JDG035).}
\author{Li-meng Xia$^\star$}
\address{LX: Faculty of Science, Jiangsu University,
Zhenjiang 212013, Jiangsu, China}
\author{Naihong Hu$^*$}
\thanks{$^*$N.H., Corresponding Author,
supported in part by the NNSF (Grants 10431040, 10728102), the
PCSIRT, the National/Shanghai Priority Academic Discipline
Programmes. \email{nhhu@math.ecnu.edu.cn}}
\address{NH: Department of Mathematics, East
China Normal University, Shanghai 200241, China}
\author{Xiaotang Bai}
\address{XB: School of Mathematical Science, Nankai University, Tianjin 300071, China}
\date{}
\begin{document}
\maketitle

\def\abstractname{ABSTRACT}
\begin{abstract}

In this paper, we construct an irreducible vertex module for twisted affine
Lie algebra of type $A_{2l}^{(2)}$.

\vskip3mm \noindent {\it Key Words}: Vertex representation, twisted
affine Lie algebra, $q$-character
\end{abstract}

\newtheorem{theo}{Theorem}[section]
\newtheorem{defi}[theo]{Definition}
\newtheorem{lemm}[theo]{Lemma}
\newtheorem{coro}[theo]{Corollary}
\newtheorem{prop}[theo]{Proposition}
\newtheorem{Rem}[theo]{Remark}

\setcounter{section}{-1}

\section{Introduction}
Since the first vertex construction was discovered by Lepowsky and
Wilson (1978), the vertex representations for any (untwisted) affine
Lie algebra have been constructed on certain Fock space by many
authors. Particularly, the vertex representations for
non-simply-laced cases in [XH] are given through the twisted
Heisenberg algebras. In fact, those modules are irreducible for
certain twisted affine Lie algebras. Consequently, the irreducible
vertex modules have been given for $A_{2l-1}^{(2)}, D_l^{(2)},
E_6^{(2)}, D_4^{(3)}$. However, the vertex representations for
$A_{2l}^{(2)}$ are not known yet.

In this paper, we will give an explicit construction for
$A_{2l}^{(2)}$ through its Heisenberg subalgebra. Moreover, such
vertex modules are also irreducible.

\section{Twisted affine Lie algebra of type $A_{2l}^{(2)}$}
Suppose that $\cg$ is a complex simple Lie algebra of type $A_{2l}$ and $\sigma$ its diagram antomorphism with order $2$. Let
$$\cg_i=\{x\in\cg \mid \sigma(x)=(-1)^ix\}$$
for $i\in\ZZ$. Then
$$\cg=\cg_0\oplus \cg_1$$
as $\CC$-spaces and $\cg_0$ is a simple Lie algebra of type $B_l$, $\cg_1$ is an irreducible $\cg_0$-module isomorphic to $L(\Lambda_1)$.

Let $s$ be an indeterminate, then the linear space
$$\cg^\sigma=\sum_{i\in\ZZ}\cg_i\otimes s^\frac{i}{2}\oplus \CC c\oplus \CC d$$
is an affine Lie algebra of type $A_{2l}^{(2)}$ with Lie bracket
\begin{eqnarray}
{[x\otimes s^m,y\otimes s^n]}&=&{[x,y]}\otimes s^{m+n}+\delta_{m+n,0}m(x,y)c,\\
{[d, x\otimes s^m]}&=&mx\otimes s^m,\\
{[c,\cg^\sigma]}&=&0.
\end{eqnarray}
Here $m,n\in\frac{1}{2}\ZZ$ and $(\,,)$ is a non-degenerate
invariant bilinear form on $\cg$.

Let $\ch_0$ be a Cartan subalgebra of $\cg_0$ and $\a_1,\cdots,\a_l\in\ch_0^*$ be such that
\begin{eqnarray}
(\a_i,\a_i)=\left\{\begin{array}{ll}\frac{1}{2},&i=l,\\
1,&i<l,\end{array}\right.
\end{eqnarray}
and
\begin{eqnarray}
(\a_i,\a_j)=\left\{\begin{array}{ll}-\frac{1}{2},&|i-j|=l,\\
0,&|i-j|>l,\end{array}\right.
\end{eqnarray}
so
$$\Pi=\{\a_i \mid i=1,\cdots,l\}$$
is an prime root system of $\cg_0$. Let $\cq=\span_\ZZ\{\a_1,\cdots,\a_l\}$. Note that there is a linear isomorphism
\begin{eqnarray*}
\gamma: \ch_0&\longrightarrow &\ch_0^*\\
\a_i^\vee&\longmapsto&\frac{2\a_i}{(\a_i,\a_i)}.
\end{eqnarray*}

Obviously, $\ch^\sigma=\ch_0\oplus\CC c\oplus\CC d$ is a Cartan subalgebra of $\cg^\sigma$. Extend the bilinear form of $\ch_0$ to $\ch^\sigma$ via
\begin{eqnarray}
(c,d)=1,\quad (c,\a_i)=(d,\a_i)=0,\quad i=1,\cdots,l.
\end{eqnarray}
Set $\beta=\frac{(c+d)}{\sqrt{2}}+\a_l$, then $(\beta,\beta)=\frac{3}{2}$ and $(\beta,\a_i)=(\delta_{i,l}-\delta_{i,l-1})\frac{1}{2}$.

\begin{lemm}
Suppose that $\dot\Delta$ is the root system of $\cg_0$ and
$\dot\Delta_S$ the subset of all short roots. Then $\cg^\sigma$ has
a root system
$$\Delta=\{n\delta\pm\a, (2n+1)\delta\pm2\a', m\delta \mid n\in\ZZ,
m\in\ZZ\setminus\{0\}, \a\in\dot\Delta, \a'\in\dot\Delta_S\}.$$
\end{lemm}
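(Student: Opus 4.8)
The plan is to read off $\Delta$ directly from the $\ch^\sigma$-weight space decomposition of $\cg^\sigma$ under the adjoint action. Since $c$ is central it acts trivially, while $d$ acts on $x\ot s^m$ by the scalar $m$; hence every root is completely determined by the $\ch_0$-weight $\bar\mu$ of the finite part $x$ together with the $s$-degree $m\in\tfrac12\ZZ$. First I would fix the normalization of $\delta$. The zero $\ch_0$-weight space of $\cg_1$ is nonzero — it contains the $\sigma$-anti-invariants $H_1=\{h\in H\mid \sigma h=-h\}$ of a $\sigma$-stable Cartan subalgebra $H$ of $\cg$, which is $l$-dimensional — so $\cg_1\ot s^{1/2}$ already produces an imaginary root at $s$-degree $\tfrac12$. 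Declaring this minimal positive imaginary root to be $\delta$ gives $\delta|_{\ch_0}=0$, $\delta(c)=0$, $\delta(d)=\tfrac12$, so that a weight vector $x\ot s^m$ of $\ch_0$-weight $\bar\mu$ carries the $\ch^\sigma$-weight $\bar\mu+2m\delta$. With this bookkeeping the lemma reduces to identifying which $\ch_0$-weights occur in the even part $\cg_0$ and in the odd part $\cg_1$.

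The even part is immediate: $\cg_0$ is simple of type $B_l$, so its nonzero $\ch_0$-weights are exactly $\dot\Delta$ and its zero-weight space has dimension $l$. Thus $\cg_0\ot s^{n}$ ($n\in\ZZ$) contributes the roots $2n\delta\pm\alpha$ for $\alpha\in\dot\Delta$, together with the imaginary root $2n\delta$ when $n\neq0$. The substantive work is the odd part. Here I would compute the $\ch_0$-weights of $\cg_1$ by restricting the root system $\{\epsilon_i-\epsilon_j\}$ of $A_{2l}$ along $\ch_0\hookrightarrow H$, using that the diagram involution $\sigma$ acts on indices by $i\mapsto 2l+2-i$. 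Writing $\bar\epsilon_i$ for the restriction, one has $\bar\epsilon_{l+1}=0$ and $\bar\epsilon_{2l+2-i}=-\bar\epsilon_i$, and the short roots of $B_l$ are $\dot\Delta_S=\{\pm\bar\epsilon_i\mid 1\le i\le l\}$. The $\sigma$-orbits of roots fall into two kinds: a generic two-element orbit $\{\alpha,\sigma\alpha\}$ (with $\sigma\alpha\neq\alpha$) splits $\cg_\alpha\oplus\cg_{\sigma\alpha}$ into a $(+1)$- and a $(-1)$-eigenvector of the same restricted weight, so that weight lands in both $\cg_0$ and $\cg_1$; these orbits account precisely for all of $\dot\Delta$. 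The fixed roots are exactly $\epsilon_i-\epsilon_{2l+2-i}$ (those with $i+j=2l+2$), and they restrict to $\pm2\bar\epsilon_i=\pm2\alpha'$ with $\alpha'\in\dot\Delta_S$.

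The crux — and the step I expect to be the main obstacle — is to show that these fixed root spaces lie in the odd part $\cg_1$, i.e.\ that $\sigma$ acts on $\cg_{\epsilon_i-\epsilon_{2l+2-i}}$ by $-1$. This is the feature separating $A_{2l}^{(2)}$ from the remaining twisted types and is what produces a non-reduced ($BC_l$-type) set of restricted roots; verifying the sign requires a careful choice of the lift of the diagram automorphism to $\cg$ and an inspection of its action on the middle root vectors. Granting this, $\cg_1$ has nonzero $\ch_0$-weights $\dot\Delta\cup\{\pm2\alpha'\mid\alpha'\in\dot\Delta_S\}$ and an $l$-dimensional zero-weight space, and no ``doubled long'' weight can occur, since a restricted weight $2\bar\epsilon_i$ is the only way a single root $\epsilon_i-\epsilon_j$ can double a finite root.

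Finally I would assemble the two gradings. The piece $\cg_1\ot s^{n+1/2}$ contributes $(2n+1)\delta\pm\alpha$ for $\alpha\in\dot\Delta$, the roots $(2n+1)\delta\pm2\alpha'$ for $\alpha'\in\dot\Delta_S$, and the imaginary root $(2n+1)\delta$. Taking the union with the even contributions, the real roots $m\delta\pm\alpha$ ($\alpha\in\dot\Delta$) appear at every integer level $m$ because $\dot\Delta$ occurs in both $\cg_0$ (even levels) and $\cg_1$ (odd levels); the roots $\pm2\alpha'$ are shifted only by odd multiples $(2n+1)\delta$; and the imaginary roots fill out $m\delta$ for all $m\in\ZZ\setminus\{0\}$. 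This is exactly the asserted description of $\Delta$.
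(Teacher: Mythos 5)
The paper never proves this lemma: it is quoted as the known description of the root system of $A_{2l}^{(2)}$ (cf.\ reference [K], Ch.~8), and the text moves straight on to defining $\Delta_L,\Delta_M,\Delta_S$. So there is no internal argument to compare yours against, and your attempt to derive the statement is already more than the paper offers. Your bookkeeping is the standard one and is correct: with $\delta(d)=\frac12$ and $\delta|_{\ch_0}=0$, the element $x\ot s^m$ of $\ch_0$-weight $\bar\mu$ has $\ch^\sigma$-weight $\bar\mu+2m\delta$; the even part $\cg_0$ (type $B_l$) contributes $2n\delta\pm\a$, $\a\in\dot\Delta$, and $2n\delta$; the two-element $\sigma$-orbits of roots of $A_{2l}$ put every $\a\in\dot\Delta$ into both $\cg_0$ and $\cg_1$ (via $e_\a\pm\sigma(e_\a)$); the $\sigma$-fixed roots $\epsilon_i-\epsilon_{2l+2-i}$ restrict to $\pm2\a'$ with $\a'$ short; and the $l$-dimensional anti-invariant part of the Cartan supplies the imaginary roots at half-integer $s$-degree.

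However, as written your proof has a genuine hole at exactly the point you flag and then ``grant'': that $\sigma$ acts by $-1$ on the fixed root spaces $\cg_{\epsilon_i-\epsilon_{2l+2-i}}$, i.e.\ that they lie in $\cg_1$. This is the one claim that produces the non-reduced ($BC_l$-type) picture distinguishing $A_{2l}^{(2)}$ from the other twisted types, so deferring it leaves the proof incomplete. The good news is that it closes in two lines, and contrary to your worry it requires no careful choice of lift — it holds for \emph{every} involutive lift $\sigma$ of the diagram automorphism. Write the fixed root as $\beta_i=\gamma+\sigma\gamma$ where $\gamma=\epsilon_i-\epsilon_{l+1}$, so $\sigma\gamma=\epsilon_{l+1}-\epsilon_{2l+2-i}$. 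Since $\beta_i$ is a root, $[\,\cg_\gamma,\cg_{\sigma\gamma}\,]=\cg_{\beta_i}$, so $[e_\gamma,\sigma(e_\gamma)]=c\,e_{\beta_i}$ with $c\neq0$; applying the involution $\sigma$ gives $\sigma\bigl([e_\gamma,\sigma(e_\gamma)]\bigr)=[\sigma(e_\gamma),e_\gamma]=-[e_\gamma,\sigma(e_\gamma)]$, hence $\sigma(e_{\beta_i})=-e_{\beta_i}$ and $\cg_{\beta_i}\subset\cg_1$ as required. (Concretely, with the lift $\sigma(x)=-\tilde{x}$, where $\tilde{x}$ is the transpose about the antidiagonal, the root vector $E_{i,2l+2-i}$ is antidiagonally symmetric, so $\sigma(E_{i,2l+2-i})=-E_{i,2l+2-i}$.) With this step inserted, your argument is complete.
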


For convenience, define
\begin{eqnarray*}
\Delta_L&=&\{2\a \mid \a\in\dot\Delta_S\},\\
\Delta_M&=&\{\a \mid \a\in\dot\Delta\setminus\dot\Delta_S\},\\
\Delta_S&=&\{\a \mid \a\in\dot\Delta_S\}.
\end{eqnarray*}

Denoted  by $\cq_M$ the lattice generated by $\a_i \ (0<i<l)$.

\section{Vertex module}
Let $H=\CC\otimes\cq$, $H_M=\CC\otimes(\cq_M+\ZZ\beta)$ and $H(n),
H_M(n+\frac{1}{2})$ be their isomorphic copies for $n\in\ZZ$,
respectively. Then
$$\widetilde{H}=\bigoplus_{n\in\ZZ}H\left(n\right)\oplus\bigoplus_{n\in\ZZ}H_M\left(n+\frac{1}{2}\right)\oplus\CC c$$
is a Lie algebra with brackt
\begin{eqnarray}
{\left[\a'\left(m+\frac{1}{2}\right), \a''\left(n-\frac{1}{2}\right)\right]}&=&\delta_{m+n,0}\left(m+\frac{1}{2}\right)(\a',\a'')c,\\
{\left[a'\left(m\right), a''\left(n\right)\right]}&=&\delta_{m+n,0}m(\a',\a'')c,\\
{\left[\widetilde{H}, c\right]}&=&0,
\end{eqnarray}
where $\a',\a''\in H_M, a',a''\in H$, and it has a Heisenberg subalgebra
$$\widehat{H}=\bigoplus_{n\in\ZZ\setminus\{0\}}H\left(n\right)\oplus\bigoplus_{n\in\ZZ}H_M\left(n+\frac{1}{2}\right)\oplus\CC c,$$
and an abelian subalgebra
$$\widehat{H^-}=\bigoplus_{n\in\ZZ^-}H\left(n\right)\oplus\bigoplus_{n\in\ZZ^-}H_M\left(n+\frac{1}{2}\right).$$

Let $\CC[\cq]$ be the space linearly generated by $e^{\a+\lambda}
(\a\in\cq)$ and $S(\widehat{H^-})$ be the symmetric algebra
generated by $\widehat{H^-}$. Where
$$\lambda=\sum_{i=1}^l\frac{i}{2}\a_i.$$
Then we have
$$(\lambda,\a_i)=\frac{1}{4}\delta_{i,l}.$$
Define
\begin{eqnarray*}
V(\cq)=S(\widehat{H^-})\otimes \CC[\cq].
\end{eqnarray*}

\begin{theo}
$V(\cq)$ is a $\widetilde{H}$-module defined by
\begin{eqnarray}
a\left(-\frac{n}{2}\right)\cdot\left(v\otimes e^{\a+\lambda}\right)&=&a\left(-\frac{n}{2}\right)v\otimes e^{\a+\lambda},n\in\ZZ^+,\\
b\left(-n\right)\cdot\left(v\otimes e^{\a+\lambda}\right)&=&b\left(-n\right)v\otimes e^{\a+\lambda},n\in\ZZ^+,\\
b\left(0\right)\cdot\left(v\otimes e^{\a+\lambda}\right)&=&(b,\a+\lambda)v\otimes e^{\a+\lambda},\\
c\cdot\left(v\otimes e^{\a+\lambda}\right)&=&v\otimes e^{\a+\lambda},
\end{eqnarray}
and $a\left(n-\frac{1}{2}\right), b(n)(n>0)$ act as partial differential operators for which
\begin{eqnarray}
a\left(n-\frac{1}{2}\right)\cdot a'\left(m+\frac{1}{2}\right)
&=&\delta_{m+n,0}\left(n-\frac{1}{2}\right)(a,a'),\\
b\left(n\right)\cdot b'\left(m\right)&=&n\delta_{m+n,0}(a,a'),
\end{eqnarray}
for $a,\, a'\in H_M, b,b'\in H$ and $\a\in\cq$.
\end{theo}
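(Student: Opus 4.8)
The plan is to realize $V(\cq)$ as a module by checking that the assigned operators furnish a Lie algebra homomorphism $\widetilde{H}\to\End(V(\cq))$. Since $\widetilde{H}$ is spanned by the modes $a(n)$ ($a\in H$), $\a(n+\tfrac12)$ ($\a\in H_M$), together with $c$, and its entire Lie structure is encoded in the brackets $(2.1)$--$(2.3)$, it suffices to verify that the operators attached by $(2.4)$--$(2.9)$ satisfy those same bracket relations as operator identities on $V(\cq)$.

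First I would fix the operators precisely. Every negative mode --- $a(n)$ with $n<0$ and $\a(n+\tfrac12)$ with $n<0$ --- acts by left multiplication in the symmetric algebra $S(\widehat{H^-})$, hence as a degree-raising operator that fixes the lattice factor $\CC[\cq]$. Every positive mode acts by the degree-lowering derivation determined by the contraction rules $(2.8)$--$(2.9)$ and extended to $S(\widehat{H^-})$ by the Leibniz rule; on a fixed element of $V(\cq)$ only finitely many terms survive, so these operators are well defined and no convergence question arises. The zero mode $b(0)$ acts as the scalar $(b,\a+\lambda)$ on the sector $S(\widehat{H^-})\ot e^{\a+\lambda}$, and $c$ acts as $\id$.

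Next I would run through the pairings of modes. When both modes are negative, or both positive, both sides vanish: the multiplication (resp.\ derivation) operators commute, and the matching bracket carries the factor $\delta_{m+n,0}$, which is zero since $m+n$ then has a fixed sign. An integer mode paired with a half-integer mode likewise gives zero on both sides, because the two families act in disjoint sets of polynomial variables of $S(\widehat{H^-})$ and $\widetilde{H}$ assigns them no cross-bracket. The substantive case therefore pairs a positive mode against a negative mode of the same type and opposite index: the commutator of a contraction derivation with a multiplication operator collapses by the Leibniz rule to a scalar, and matching that scalar against $\delta_{m+n,0}(m+\tfrac12)(\a',\a'')$ in the half-integer case and $\delta_{m+n,0}\,m\,(b,b')$ in the integer case reproduces the central term $c=\id$. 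For the zero mode, $b(0)$ is scalar on each lattice sector while every creation or annihilation operator preserves that sector, so $b(0)$ commutes with all modes, matching $[b(0),b'(n)]=\delta_{n,0}\cdot0\cdot(b,b')\,c=0$; finally $c=\id$ is central, matching $[\widetilde{H},c]=0$.

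The main obstacle is bookkeeping rather than conceptual depth. One must keep the integer modes, paired by $(\,,\,)$ on $H$, strictly separate from the half-integer modes, paired by $(\,,\,)$ on $H_M$ --- where the extra generator $\beta$ contributes through $(\beta,\beta)=\tfrac32$ and $(\beta,\a_i)=(\delta_{i,l}-\delta_{i,l-1})\tfrac12$ --- and one must track the two different central coefficients, $m$ for integer modes versus $m+\tfrac12$ for half-integer modes. Once the mixed-case contraction is carried out with the correct form, the Jacobi identity in $\End(V(\cq))$ is automatic, and because the homomorphism has been verified directly on a spanning set, no separate check of well-definedness is required.
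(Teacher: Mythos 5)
Your verification is correct, and it is precisely the standard Fock-space argument: the paper itself states this theorem \emph{without any proof} (treating it as the well-known Heisenberg/Fock construction), so your write-up simply supplies the routine check the authors omitted. The case analysis — creation vs.\ annihilation modes, the disjointness of the integer and half-integer variable families, the scalar action of $b(0)$ on each lattice sector, and the collapse of the mixed commutator to the central term — is exactly what a full proof of this statement requires, and nothing in it conflicts with how the operators are used later in the paper.
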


\section{$2$-cocycle}
Define bilinear map $\epsilon: \cq\longrightarrow \{\pm1\}$ by
\begin{eqnarray}
\ep\left(\sum_{i=1}^lk_i\a_i, \sum_{j=1}^lr_j\a_j\right)
&=&\prod_{i,j=1}^l\ep(\a_i,\a_j)^{k_ir_j},\\
\ep(\a_i,\a_j)&=&\left\{\begin{array}{ll}-1,&i=j+1\\ 1,&
otherwise.
\end{array}\right.
\end{eqnarray}

\begin{lemm}
If $\a,\a',\a+\a'\in\dot\Delta$, then
\begin{eqnarray}
\ep(\a,\a')=-\ep(\a',\a).
\end{eqnarray}
\end{lemm}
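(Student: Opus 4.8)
The plan is to use the bimultiplicativity of $\ep$ to reduce the claim to a single parity statement, and then to settle that statement by a short analysis of the three ways in which $\a,\a',\a+\a'$ can all be roots. Writing $\ep=(-1)^F$, the defining relations make $F\colon\cq\times\cq\to\ZZ/2\ZZ$ a bilinear form with $F(\a_i,\a_j)\equiv\delta_{i,j+1}$. Since $\ep(\a,\a'),\ep(\a',\a)\in\{\pm1\}$, the identity $\ep(\a,\a')=-\ep(\a',\a)$ is equivalent to $\ep(\a,\a')\ep(\a',\a)=-1$, i.e. to showing that the symmetrized form $G(\a,\a'):=F(\a,\a')+F(\a',\a)$ satisfies $G(\a,\a')\equiv 1\pmod 2$ whenever $\a,\a',\a+\a'\in\dot\Delta$. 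Bilinearity together with $G(\a_i,\a_j)\equiv\delta_{|i-j|,1}$ gives $G(\a,\a')\equiv\sum_{|i-j|=1}k_ir_j$ for $\a=\sum_ik_i\a_i$ and $\a'=\sum_jr_j\a_j$.

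The key step is to compare $G$ with the invariant form. Reducing $2(\a,\a')=\sum_{i,j}k_ir_j\,2(\a_i,\a_j)$ modulo $2$ and using the inner products (4)--(5), the off-diagonal terms reproduce $\sum_{|i-j|=1}k_ir_j\equiv G(\a,\a')$, while on the diagonal $2(\a_i,\a_i)$ is even for $i<l$ and equals $1$ for $i=l$; hence the diagonal contributes only $k_lr_l$. This yields the identity
$$G(\a,\a')\equiv 2(\a,\a')+k_lr_l\pmod 2,$$
where $k_l,r_l$ denote the coefficients of the short simple root $\a_l$ in $\a,\a'$.

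It remains to read off the parity of $k_l$ and the value of $2(\a,\a')$. Passing to the standard realization of $B_l$ with an orthogonal basis $e_1,\dots,e_l$ normalized by $(e_i,e_j)=\frac12\delta_{ij}$, one has $\a_i=e_i-e_{i+1}$ for $i<l$ and $\a_l=e_l$, whence $e_i=\a_i+\cdots+\a_l$. Thus the short roots $\pm e_i$ have $k_l$ odd and the long roots $\pm e_i\pm e_j$ have $k_l$ even; that is, $k_l\equiv 1$ exactly when $\a\in\dot\Delta_S$. On the other hand, expanding $2(\a,\a')=(\a+\a',\a+\a')-(\a,\a)-(\a',\a')$ and noting that the only length patterns with $\a,\a',\a+\a'$ all roots are $\mathrm{short}+\mathrm{short}=\mathrm{long}$, $\mathrm{short}+\mathrm{long}=\mathrm{short}$ and $\mathrm{long}+\mathrm{long}=\mathrm{long}$, one finds $2(\a,\a')\equiv 0,1,1\pmod 2$ respectively. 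In the first pattern both roots are short, so $k_lr_l\equiv 1$ and $G\equiv 0+1=1$; in the other two at most one root is short, so $k_lr_l\equiv 0$ and $G\equiv 1+0=1$. In every case $G(\a,\a')\equiv 1$, as required.

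The main obstacle I anticipate is bookkeeping the diagonal correction $k_lr_l$. Because $\a_l$ is short, $2(\a_l,\a_l)=1$ is the unique odd diagonal entry, so $G$ cannot simply be identified with $2(\a,\a')\bmod 2$; it is precisely the coincidence that ``$\a$ is short'' matches ``$k_l$ is odd'' that forces the two exceptional contributions to cancel and makes the parity come out to $1$ uniformly.
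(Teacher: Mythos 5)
Your proof is correct, and it is genuinely different from the paper's treatment: the paper gives no argument at all for this lemma, simply pointing to \cite{J} and \cite{XH} and remarking that $\ep$ is the (sign of structure constants) $2$-cocycle of type $B_l$. Your computation checks out at every step: bimultiplicativity makes $F$ (where $\ep=(-1)^F$) bilinear over $\ZZ/2\ZZ$ with $F(\a_i,\a_j)\equiv\delta_{i,j+1}$, so the symmetrized form is $G(\a,\a')\equiv\sum_{|i-j|=1}k_ir_j$; comparing this with $2(\a,\a')=\sum_{i,j}k_ir_j\,2(\a_i,\a_j)$, computed from the paper's inner products, the sole odd diagonal entry $2(\a_l,\a_l)=1$ yields exactly your identity $G(\a,\a')\equiv 2(\a,\a')+k_lr_l\pmod 2$; your realization $\a_i=e_i-e_{i+1}$ ($i<l$), $\a_l=e_l$ with $(e_i,e_j)=\frac{1}{2}\delta_{ij}$ reproduces the paper's (non-standard) normalization and shows $k_l$ is odd precisely for short roots; and the three admissible length patterns give $2(\a,\a')\equiv 0,1,1$ with $k_lr_l\equiv 1,0,0$, hence $G\equiv 1$ in all cases, which is equivalent to the asserted antisymmetry since $\ep$ takes values in $\{\pm1\}$. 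The one assertion you leave implicit --- that short$+$short$=$long, short$+$long$=$short, long$+$long$=$long are the only possible patterns --- is justified in one line in your own coordinates: the coordinate sum $\sum_i c_i$ of $\a=\sum_i c_ie_i$ is odd for short roots and even for long roots, and it is additive. What your route buys is a self-contained verification from the definitions actually printed in the paper, isolating the one delicate point (the odd diagonal entry contributed by the short simple root) and showing why it cancels; what the paper's citation buys is brevity and the conceptual identification of $\ep$ with the standard structure-constant sign cocycle of type $B_l$, which is the form in which the property is implicitly used in the bracket computations of Section 5.
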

\begin{proof} For the definition of this map, one can see \cite{J},
\cite{XH}. In fact it is a (sign of structure constants) $2$-cocycle
of $B_l$ type. \end{proof}

Define a map $p: \cq\longrightarrow H_M$ by
\begin{eqnarray}
&&p\left(\sum_{i=1}^lk_i\a_i\right)=\sum_{i=1}^{l-1}\sgn(k_i)\left(k_i-2\left[\frac{k_i}{2}\right]\right)\a_i+\sgn(k_l)\left(k_l-2\left[\frac{k_l}{2}\right]\right)\beta.
\end{eqnarray}
where $\sgn(k)=1$ if $k\geq0$ and $\sgn(k)=-1$ if $k<0$.
Also define
\begin{eqnarray}
p_0\left(\sum_{i=1}^lk_i\a_i\right)=\sum_{i=1}^{l}\sgn(k_i)\left(k_i-2\left[\frac{k_i}{2}\right]\right)\a_i.
\end{eqnarray}

\begin{lemm}
(1) If $\a\in\Delta_L$, then $p(\a)=p_0(\a)=0$;

(2) If $\a\in\Delta_M$, then $p(\a)=p_0(\a)=\pm(\a_i+\a_{i+1}+\cdots+\a_j)$ for some $1\leq i<j<l$;

(3) If $\a\in\Delta_S$, then
$$p(\a)=\pm\left(\a_i+\a_{i+1}+\cdots+\a_{l-1}+\beta\right)$$
 for some $1\leq i\leq l$, and $p_0(\a)=\a$.
\end{lemm}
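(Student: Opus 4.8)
The plan is to reduce the entire statement to an elementary computation of root coordinates, the only genuine content being a parity bookkeeping. First I would analyze the scalar map $f(k)=\sgn(k)\,(k-2[k/2])$ that appears in both $p$ and $p_0$: a direct check shows $f(k)=0$ when $k$ is even, $f(k)=\sgn(k)$ when $k$ is odd, and in particular $f(-k)=-f(k)$. Hence $p$ and $p_0$ are odd maps, so the sign ambiguity $\pm$ in all three cases follows at once from $p(-\a)=-p(\a)$ and $p_0(-\a)=-p_0(\a)$, and it suffices to treat one representative of each $\pm$ pair. Moreover $p$ and $p_0$ differ only in the last slot, where $p_0$ contributes $f(k_l)\a_l$ and $p$ contributes $f(k_l)\beta$; thus $p(\a)=p_0(\a)$ whenever $k_l$ is even, and they genuinely differ only when $k_l$ is odd. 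This already isolates $\beta$ as a marker for the parity of the $\a_l$-coordinate.

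Next I would expand every root of $\dot\Delta$ in the simple-root basis using $e_i=\a_i+\a_{i+1}+\cdots+\a_l$ (the standard $B_l$ expansion in the present normalization) and read off the coordinate vector $(k_1,\dots,k_l)$ for each type. For $\Delta_L$ the roots are $\pm 2e_i$, with coordinates equal to $2$ on the tail $\{i,\dots,l\}$ and $0$ elsewhere; every entry being even, $f(k_m)=0$ for all $m$ and $p(\a)=p_0(\a)=0$, which is (1). For $\Delta_M$ I would split into $e_i-e_j$, whose coordinates equal $1$ on the block $\{i,\dots,j-1\}$ and $0$ elsewhere, and $e_i+e_j$, whose coordinates equal $1$ on $\{i,\dots,j-1\}$ and $2$ on $\{j,\dots,l\}$. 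In both subcases $k_l$ is even, so no $\beta$ appears and $p=p_0$; applying $f$ annihilates the $2$'s and leaves $\a_i+\cdots+\a_{j-1}$, a signed sum of consecutive simple roots with all indices strictly below $l$, which is (2). Finally, for $\Delta_S$ the roots are $\pm e_i$, with all coordinates $\pm1$ on the tail $\{i,\dots,l\}$; here $f$ acts as the identity, so $p_0(\pm e_i)=\pm\sum_{m\ge i}\a_m=\pm e_i=\a$, while the odd entry $k_l=\pm1$ turns the last term into $\pm\beta$, giving $p(\pm e_i)=\pm(\a_i+\cdots+\a_{l-1}+\beta)$, which is (3).

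I expect the only place needing real care to be the middle-root computation, specifically the $e_i+e_j$ case: one must verify that the inner coordinates equal $2$ (hence even, hence killed by $f$) while the outer coordinates equal $1$, so that the a priori length-$(l-i+1)$ expression collapses to the short consecutive block $\a_i+\cdots+\a_{j-1}$. The conceptual heart of the lemma is the single observation that the parity of the $\a_l$-coordinate $k_l$ distinguishes the three root types — even for $\Delta_L$ and $\Delta_M$, odd for $\Delta_S$ — since this is exactly what controls whether $\beta$ survives in $p$; everything else is routine coordinate bookkeeping, with the $\pm$ disposed of once and for all by the oddness of $f$. (One should note that the block in (2) may reduce to a single index, e.g.\ $e_i-e_{i+1}=\a_i$, so the stated range is most accurately read as $1\le i\le j<l$.)
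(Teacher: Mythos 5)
Your proof is correct. The paper in fact states this lemma \emph{without any proof} (it is treated as a routine consequence of the definitions of $p$ and $p_0$), and your argument --- reducing to the parity behavior of $f(k)=\sgn(k)\bigl(k-2[k/2]\bigr)$, then expanding the three root types $\pm 2e_i$, $\pm(e_i\pm e_j)$, $\pm e_i$ of $B_l$ in the simple-root basis with $e_i=\a_i+\cdots+\a_l$ --- is exactly the verification the authors leave implicit; your reading of $[\,\cdot\,]$ as the floor (making $p$, $p_0$ odd maps) is the one consistent with the paper's later use of $(p(a),p(b))$ in Section 5. Your parenthetical remark is also a legitimate minor correction: in case (2) the consecutive block can degenerate to a single simple root (e.g.\ $e_i-e_{i+1}=\a_i$), so the paper's range $1\leq i<j<l$ should indeed be read as $1\leq i\leq j<l$.
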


\section{Vertex construction}
Let $z$ be a complex variable. For $\a,r\in\cq$, define define $\CC$-linear operators as
\begin{eqnarray*}
z^\a(v\otimes e^{r+\lambda})&=&z^{(\a,r+\lambda)}v\otimes e^{r+\lambda},\\
e^\a(v\otimes e^{r+\lambda})&=&v\otimes e^{\a+r+\lambda},\\
\ep_\a(v\otimes e^{r+\lambda})&=&\ep(\a,r)v\otimes e^{r+\lambda},\\
E^\pm(\a,z)&=&\exp\left(\mp\sum_{n=1}^\infty\frac{z^{\mp2n}}{n}\a(\pm n)\right),\\
F^\pm(p(\a),z)&=&\exp\left(\mp\sum_{n=1}^\infty\frac{2z^{\mp(2n-1)}}{2n-1}p(\a)\left(\pm \left(n-\frac{1}{2}\right)\right)\right),
\end{eqnarray*}
and
$$\a_i(z)=\sum_{n\in\ZZ}\a_i\left(n\right)z^{-2n}+\sum_{n\in\ZZ}p(\a_i)\left(n-\frac{1}{2}\right)z^{-2n+1},\quad i=1,\cdots,l,$$
$$\left(\sum_{i=1}^lc_i\a_i\right)(z)=\sum_{i=1}^lc_i\a_i(z).$$
Then $E^\pm(\a,z)$ and $F^\pm(\a,z)$, $\a(z)$ are elements in $\End(V(\cq))[[z,z^{-1}]]$.

Let $\widetilde{V(\cq)}$ be the formal completion of $V(\cq)$.
We give vertex operators on $\widetilde{V(\cq)}$:

1. For $\a\in\cq$, define
\begin{eqnarray}
&&Y(\a,z)=\left\{\begin{array}{ll}
E^-(\a,z)E^+(\a,z),& if\; p(\a)=0,\\
\sqrt{-1}E^-(\a,z)E^+(\a,z)F^-(p(\a),z)F^+(p(\a),z),& if\; p(\a)\in\Delta_S,\\
E^-(\a,z)E^+(\a,z)F^-(p(\a),z)F^+(p(\a),z),& if\; p(\a)\in\Delta_L.
\end{array}\right.
\end{eqnarray}

2. For $\a\in\cq$, define
\begin{eqnarray}
&&X(\a,z)=(-1)^{-p_0(\a)}Y(\a,z)z^{(\a,\a)}e^\a z^{2\a}\ep_\a.
\end{eqnarray}

Also define
\begin{eqnarray}
&&X(a,b,z,w)=(-1)^{-p_0(a+b)}:Y(a,z)Y(b,w):w^{(a+b,a+b)}e^{a+b}w^{a+b}\ep_{a+b},
\end{eqnarray}
where $:\quad:$ means the normal ordered product:
\begin{eqnarray*}
:a(m)b(n):&=&\left\{\begin{array}{ll}
a(m)b(n),&m\leq n;\\
b(n)a(m),&m>n.
\end{array}\right.
\end{eqnarray*}
for suitable $m,n\in\frac{1}{2}\ZZ$.

3. Suppose that $e_1,\cdots,e_l$ and $e_1',\cdots,e_l'$ are bases of
$H$ and $H_M$, respectively, such that
$$(e_i,e_j)=\delta_{ij},\quad (e_i',e_j')=\delta_{ij}.$$
Define operator {\small\begin{eqnarray}
&&d_0=\frac{1}{2}\sum_{i=1}^le_i(0)e_i(0)+\sum_{i=1}^l\sum_{n=1}^\infty
e_i(-n)e_i(n)+\sum_{i=1}^l\sum_{n=1}^\infty
e_i'\left(-n+\frac{1}{2}\right)e_i'\left(n-\frac{1}{2}\right).
\end{eqnarray}}
For $x=a_1(m_1)\cdots a_k(m_k)\otimes e^r\in V(\cq)$, define
$$\deg(x)=\sum_{j=1}^k m_j-\frac{1}{2}(r,r).$$
$\deg(x)$ is called the degree of $x$. Then
$$d_0(x)=-\deg(x).$$

\begin{lemm}
\begin{eqnarray}
{[d_0, \a(m)]}&=&-m\a(m),\\
{[d_0, X(\a,z)]}&=&\frac{1}{2}\frac{z\partial}{\partial z}X(\a,z).
\end{eqnarray}
\end{lemm}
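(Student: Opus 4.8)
The plan is to read off the first relation from the identity $d_0(x)=-\deg(x)$ recorded just above the lemma, and then to bootstrap the second relation from the first by checking it factor-by-factor in the definition of $X(\a,z)$, exploiting that both $[d_0,-]$ and the operator $\frac{1}{2}z\partial_z$ are derivations on $\End(V(\cq))[[z,z^{-1}]]$.

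For the first relation, note that each mode $\a(m)$ shifts the degree by $m$: a creation mode ($m<0$) prepends $\a(m)$ to a monomial and raises $\sum_j m_j$ by $m$, an annihilation mode ($m>0$) can only contract against a mode $a_j(-m)$ and again changes $\sum_j m_j$ by $m$, while $b(0)$ acts by the scalar $(b,\a+\lambda)$ and leaves the degree unchanged. Hence for homogeneous $x$ one has $\deg(\a(m)x)=\deg(x)+m$, so that $d_0\a(m)x=-(\deg(x)+m)\a(m)x$ and $\a(m)d_0x=-\deg(x)\,\a(m)x$; subtracting gives $[d_0,\a(m)]=-m\a(m)$ for every $m\in\frac{1}{2}\ZZ$.

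For the second relation I would verify that every building block $O$ of $X(\a,z)$ satisfies $[d_0,O]=\frac{1}{2}z\partial_z O$, and then assemble them. For the Heisenberg exponentials this is a direct consequence of the first relation: writing $E^+(\a,z)=\exp(A)$ with $A=-\sum_{n\ge1}\frac{z^{-2n}}{n}\a(n)$, the first relation gives $[d_0,A]=\sum_{n\ge1}z^{-2n}\a(n)=\frac{1}{2}z\partial_z A$, and since $A$ commutes with $[d_0,A]$ (both lie in the abelian algebra of positive modes) one gets $[d_0,E^+]=[d_0,A]E^+=\frac{1}{2}z\partial_z E^+$; the computation for $E^-$ and for $F^\pm(p(\a),z)$ is identical, the half-integer powers of $z$ matching the half-integer modes exactly as above.

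The delicate factor, and the main obstacle, is the position operator $z^{(\a,\a)}e^\a z^{2\a}$, because $e^\a$ alters the momentum $r+\lambda\mapsto\a+r+\lambda$ and hence the quadratic part of the degree, while the attached powers of $z$ are tuned to compensate. Acting on $v\ot e^{r+\lambda}$ this operator produces the scalar $z^{(\a,\a)+2(\a,r+\lambda)}$ and shifts the momentum, so a direct check gives both $[d_0,\,z^{(\a,\a)}e^\a z^{2\a}]$ and $\frac{1}{2}z\partial_z(z^{(\a,\a)}e^\a z^{2\a})$ equal to $\big(\frac{1}{2}(\a,\a)+(\a,r+\lambda)\big)\,z^{(\a,\a)}e^\a z^{2\a}$, using the expansion $\frac{1}{2}(\a+r+\lambda,\a+r+\lambda)-\frac{1}{2}(r+\lambda,r+\lambda)=\frac{1}{2}(\a,\a)+(\a,r+\lambda)$. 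The remaining factors $(-1)^{-p_0(\a)}$, $\sqrt{-1}$ and $\ep_\a$ are $z$-independent and commute with the diagonal operator $d_0$, so both sides vanish on them. Finally, since $[d_0,-]$ and $\frac{1}{2}z\partial_z$ are derivations, the relation $[d_0,O]=\frac{1}{2}z\partial_z O$ is preserved under products; applying this to the factorization $X(\a,z)=(-1)^{-p_0(\a)}Y(\a,z)\,z^{(\a,\a)}e^\a z^{2\a}\ep_\a$ yields $[d_0,X(\a,z)]=\frac{1}{2}z\partial_z X(\a,z)$, as claimed. Throughout, these identities are understood on the formal completion $\widetilde{V(\cq)}$, where the infinite sum defining $d_0$ and the vertex operators act without convergence issues.
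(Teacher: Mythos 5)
Your proof is correct and follows essentially the same route as the paper: the first relation by tracking how each mode $\a(m)$ shifts the degree, and the second by commuting $d_0$ through the factors of $X(\a,z)$ using the first relation (the paper's single displayed formula is exactly the factor-by-factor contribution you compute, with the creation modes on the left, annihilation modes on the right, and $\a(0)+\frac{(\a,\a)}{2}$ coming from $z^{(\a,\a)}e^\a z^{2\a}$). Your explicit appeal to the derivation property of $[d_0,-]$ and $\frac{1}{2}z\partial_z$ just makes the paper's implicit bookkeeping precise.
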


\begin{proof} It is clear that
$$\deg(\a(m)\cdot (v\otimes e^r))=(m+\deg(v\otimes e^r))(\a(m)\cdot (v\otimes e^r)),$$
so
$$[d_0,\a(m)]\cdot(v\otimes e^r)=-m\a(m)\cdot (v\otimes e^r),$$
then (25) holds.
From (25), we have
\begin{equation*}
\begin{split}
\Bigl[d_0,X(\a,z)\Bigr]&=\left(\sum_{n=1}^\infty \a(-n)z^{2n}
+p(\a)\left(-n{+}\frac{1}{2}\right)z^{2n-1}+\a(0)+\frac{(\a,\a)}{2}\right)X(\a,z)\\
&\quad+\,X(\a,z)\left(\sum_{n=1}^\infty
\a(n)z^{-2n}+p(\a)\left(n-\frac{1}{2}\right)z^{-2n+1}\right),
\end{split}
\end{equation*}
which equals
$$\frac{1}{2}\frac{z\partial}{\partial z}X(\a,z).$$
\end{proof}

\begin{lemm}
For any $\a\in\Delta_L\cup\Delta_M\cup\Delta_S$, the Laurent series of $X(\a,z)$ are denoted by
$$X(\a,z)=\sum_{n\in\ZZ}X_\frac{n}{2}(\a)z^{-n}.$$
Particularly, if $\a\in\Delta_L$, we have
$$X(\a,z)=\sum_{n\in\ZZ}X_{n+\frac{1}{2}}(\a)z^{-n}.$$
\end{lemm}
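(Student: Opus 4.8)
The plan is to reduce the statement to a bookkeeping of the powers of $z$ that can occur when $X(\a,z)$ is applied to a basis vector $v\otimes e^{r+\lambda}$ (with $r\in\cq$), and then to read off which powers survive. Writing $X(\a,z)=(-1)^{-p_0(\a)}Y(\a,z)\,z^{(\a,\a)}e^\a z^{2\a}\ep_\a$ and noting that $(-1)^{-p_0(\a)}$, $\ep_\a$ and $e^\a$ carry no power of $z$, the exponent of $z$ in any monomial of $X(\a,z)\bigl(v\otimes e^{r+\lambda}\bigr)$ is the sum of three contributions: the scalar $(\a,\a)$; the eigenvalue $2(\a,r+\lambda)$ of $z^{2\a}$ on $v\otimes e^{r+\lambda}$; and the exponent coming from the Heisenberg factor $Y(\a,z)$. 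Since the conclusion will hold uniformly over all basis vectors $v\otimes e^{r+\lambda}$, it transfers to the operator $X(\a,z)$ itself, so the first step is to record the $z$-parity of each factor of $Y$.

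Directly from the definitions, every monomial of $E^\pm(\a,z)$ carries an \emph{even} power of $z$, since each elementary factor contributes $z^{\mp2n}$, whereas every monomial of $F^\pm(p(\a),z)$ carries an arbitrary \emph{integer} power of $z$, since each elementary factor contributes $z^{\mp(2n-1)}$ and a product of $k$ of them has the parity of $k$. Combined with the lemma describing $p$, this yields the dichotomy driving the proof: when $p(\a)=0$ the factors $F^\pm$ are absent and $Y(\a,z)$ carries only even powers of $z$, whereas when $p(\a)\neq0$ the operator $Y(\a,z)$ carries arbitrary integer powers. Only integrality, not a specific parity, of the $Y$-contribution is needed in the generic case.

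The second step is to evaluate the lattice contribution $(\a,\a)+2(\a,r)+2(\a,\lambda)$ modulo $2$ in each of the three cases, using $(\a_i,\lambda)=\frac14\delta_{i,l}$, the normalisation $(\a_i,\a_i)\in\{\frac12,1\}$, and the description of $p$. Since every pairing $(\a_i,\a_j)$ lies in $\frac12\ZZ$, one has $2(\a,r)\in\ZZ$ for all $r\in\cq$. For $\a\in\Delta_S$ one finds $(\a,\a)=\frac12$ and $2(\a,\lambda)=\pm\frac12$, so the two half-integers sum to an integer; for $\a\in\Delta_M$ one has $(\a,\a)=1$ and $2(\a,\lambda)\in\ZZ$. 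Hence the total exponent is an integer for every $r$, which proves the first assertion: $X(\a,z)$ involves only integer powers of $z$, so $X(\a,z)=\sum_{n\in\ZZ}X_{n/2}(\a)z^{-n}$ with modes indexed by $\frac12\ZZ$. For the refinement, let $\a\in\Delta_L$; then $p(\a)=0$, so $Y(\a,z)$ contributes only even powers, while $(\a,\a)=2$, $2(\a,r)\in2\ZZ$ (as $\a\in2\dot\Delta_S$) and $2(\a,\lambda)=\pm1$ is odd. Thus every occurring power of $z$ is odd; equivalently only the modes $X_{n+\frac12}(\a)$ survive, which is the asserted refinement.

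The one point needing genuine care, rather than a mechanical check, is the cancellation of the fractional exponents in the short-root case $\a\in\Delta_S$: the value $(\a,\a)=\frac12$ would by itself force half-integral powers of $z$, and it is precisely the shift by $\lambda$, whose defining property $(\lambda,\a_i)=\frac14\delta_{i,l}$ gives $2(\a,\lambda)=\pm\frac12$, that restores integrality. The substance of the lemma is thus that $\lambda$ (equivalently the vector $\beta$ entering the definition of $p$) is tuned so that the fractional part of $(\a,\a)$ is always matched by that of $2(\a,\lambda)$; once this is isolated, the integrality claim for $\Delta_M\cup\Delta_S$ and the parity claim for $\Delta_L$ reduce to the routine case checks above.
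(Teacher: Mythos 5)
Your proposal is correct and takes essentially the same approach as the paper: both reduce the claim to the parity of the lattice exponent $(\alpha,\alpha)+2(\alpha,r+\lambda)$, checked case by case for $\Delta_S$, $\Delta_M$, $\Delta_L$, and then combine it with the fact that the $E^{\pm}$-factors contribute only even powers of $z$ while the $F^{\pm}$-factors contribute integer powers of either parity (and are absent when $p(\alpha)=0$). The paper phrases the lattice computation as the degree shift $2[\deg(e^{\alpha}\cdot e^{\lambda+r})-\deg(e^{\lambda+r})]$ and invokes Equation (25), but since the exponent of $z$ is exactly this quantity, your direct tracking of $z$-powers is the same argument, merely with the final bookkeeping (which the paper compresses into ``by the definition of $E$, $F$ operators'') written out explicitly.
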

\begin{proof} Let $r\in\cq$. If $\a\in\Delta_L$, then $\a=\pm2(\a_i+\cdots\a_l)$ for
some $i>0$, and $(\a,\lambda)=\pm\frac{1}{2}$, for any
$r\in\cq$, $(\a,r)\in\ZZ$, so
\begin{eqnarray*}
{2[\deg(e^\a\cdot e^{\lambda+r})-\deg(e^{\lambda+r})]}&=&
(\a+r+\lambda,\a+r+\lambda)-(r+\lambda,r+\lambda)\\
&=&(\a,\a)+2(\a,r+\lambda)\\
&=&2\pm1+2(\a,r)\in2\ZZ+1,
\end{eqnarray*}
If  $\a\in\Delta_S$, then $\a=\pm(\a_i+\cdots\a_l)$ for some $i>0$,
\begin{eqnarray*}
{2[\deg(e^\a\cdot e^{\lambda+r})-\deg(e^{\lambda+r})]}&=&
(\a+r+\lambda,\a+r+\lambda)-(r+\lambda,r+\lambda)\\
&=&(\a,\a)+2(\a,r+\lambda)\\
&=&\frac{1}{2}\pm\frac{1}{2}+2(\a,r)\in\ZZ,
\end{eqnarray*}
If  $\a\in\Delta_M$, then
$\a=\pm((\a_i+\cdots+\a_{j-1}+\a_j+\cdots\a_l)\pm(\a_j+\cdots+\a_l))$
for some $l\geq j>i>0$, so $(\a,\lambda)=1$ or $0$, thus
\begin{eqnarray*}
{2[\deg(e^\a\cdot e^{\lambda+r})-\deg(e^{\lambda+r})]}&=&
(\a+r+\lambda,\a+r+\lambda)-(r+\lambda,r+\lambda)\\
&=&(\a,\a)+2(\a,r+\lambda)\\
&=&1+2(\a,r)\pm1\in\ZZ\\
or&=&1+2(\a,r)\in\ZZ
\end{eqnarray*}
Then by Equation(25) and the definition of $E,\,F$ operators, we
know that the lemma holds. \end{proof}
\begin{theo}
The Lie algebra linearly generated by operators
$$\left\{\left.X_\frac{n}{2}(\a),X_n(\a'),e_i(n),e_i'\left(n{+}\frac{1}{2}\right),\id,
 d_0 \right| \a\in\Delta_M\cup\Delta_S, \a'\in\Delta_L, n\in\ZZ, i=1,\cdots,l\right\}$$
on $V(\cq)$ is isomorphic to the twisted affine Lie algebra of type
$A_{2l}^{(2)}$. The isomorphism $\pi$ is given by
\begin{eqnarray*}
\pi(e_\a\otimes s^\frac{n}{2})&=&X_\frac{n}{2}(\a),\quad \a\in\Delta_M\cup\Delta_S,\\
\pi(e_\a\otimes s^{n+\frac{1}{2}})&=&X_{n+\frac{1}{2}}(\a),\quad \a\in\Delta_L,\\
\pi(\gamma^{-1}(\a_i)\otimes s^n)&=&\a_i(n),\\
\pi(c)&=&\id,\\
\pi(d)&=&-d_0.
\end{eqnarray*}
Additionally, the image of imaginary root vectors with non-integer
degree can be obtained from the above definition, together with Lie
bracket.
\end{theo}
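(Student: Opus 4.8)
The plan is to exhibit $\pi$ as a bijective homomorphism of Lie algebras. Surjectivity is immediate, since the operators listed in the theorem are by definition the images under $\pi$ of a generating set of $A_{2l}^{(2)}$. By Lemma 4.1, $d_0$ (equivalently $-d$) induces a grading with respect to which $\pi$ is homogeneous, and matching this grading against the root system $\Delta$ of Lemma 1.1 identifies the two sides root space by root space, which gives injectivity. Hence the whole proof reduces to verifying that $\pi$ intertwines the brackets (1)--(3), that is, to computing the commutators of the operators $X(\a,z)$, $X(\b,w)$ and $\a_i(n)$ and comparing them with the structure constants of $A_{2l}^{(2)}$.

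First I would record the elementary straightening identities of the vertex calculus. Since integer and half-integer modes commute (relations (7) and (8)), the $E$- and $F$-exponentials reorder independently, and a Baker--Campbell--Hausdorff computation using the Heisenberg relations of Theorem 2.1 gives
\begin{eqnarray*}
E^+(\a,z)E^-(\b,w)&=&\left(1-\frac{w^2}{z^2}\right)^{(\a,\b)}E^-(\b,w)E^+(\a,z),\\
F^+(p(\a),z)F^-(p(\b),w)&=&\left(\frac{1-w/z}{1+w/z}\right)^{(p(\a),p(\b))}F^-(p(\b),w)F^+(p(\a),z),
\end{eqnarray*}
while the zero-mode operators satisfy $z^\b e^\a=z^{(\b,\a)}e^\a z^\b$ and $\ep_\b e^\a=\ep(\b,\a)e^\a\ep_\b$.

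Assembling these in the product $X(\a,z)X(\b,w)$, the annihilation-past-creation reordering produces the single rational prefactor
$$\left(1-\frac{w}{z}\right)^{(\a,\b)+(p(\a),p(\b))}\left(1+\frac{w}{z}\right)^{(\a,\b)-(p(\a),p(\b))}$$
multiplying a normal-ordered expression of the form (23), the zero-mode parts combining into $e^{\a+\b}$, the matching power of $w$, and the cocycle cross-factor $\ep(\a,\b)$. The commutator $[X(\a,z),X(\b,w)]$ is then the difference of the two orderings, and its singular part is governed entirely by the poles of this prefactor at $w=\pm z$. Reading off the pole orders from Lemma 3.2 (which fixes $p(\a)$ and $p(\b)$, hence both exponents, in each of the cases $\a\in\Delta_L$, $\Delta_M$, $\Delta_S$) and expanding in the formal $\delta$-functions $\delta(w/z)$, $\delta(-w/z)$ and their derivatives, I would extract the component brackets: when $\a+\b$ is again a root a simple pole yields a single root vector $X_\bullet(\a+\b)$ with structure constant $\pm\ep(\a,\b)$ (the sign being pinned down by Lemma 3.1 together with the factors $\sqrt{-1}$ and $(-1)^{-p_0(\a)}$); when $\a+\b=0$ the leading pole at $w=z$ yields a Heisenberg element together with the central term $m\delta_{m+n,0}c$, matching (1); and when $\a+\b$ is not a root the prefactor is regular, so the operators commute. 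The companion bracket $[\a_i(z),X(\b,w)]$ is handled identically and reproduces $(\a_i,\b)X_\bullet(\b)$, the Cartan action, while the pure brackets of the $\a_i(n)$ are built into Theorem 2.1.

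The main obstacle is the case analysis and sign bookkeeping forced by the twisting. Unlike the untwisted construction, the half-integer modes packaged in $F^\pm$ make the exponents at $w=z$ and $w=-z$ independent, so one must track both poles simultaneously and confirm, in each case of Lemma 3.2, that the mode indexing assigned by $\pi$ (the half-integer modes $X_{n+\frac{1}{2}}(\a')$ for $\a'\in\Delta_L$, and the full $\frac{1}{2}\ZZ$-indexed family $X_{n/2}(\a)$ for $\a\in\Delta_M\cup\Delta_S$) agrees with the degree parity of Lemma 4.2. The factors $\sqrt{-1}$ on short roots and $(-1)^{-p_0(\a)}$ are precisely what render the emergent structure constants antisymmetric and compatible with the Jacobi identity; verifying that the bracket therefore closes on exactly the spanning set---neither overflowing it nor landing in a proper subalgebra---is the delicate final point. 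Once the relations (1)--(3) are matched term by term, $\pi$ is the desired isomorphism.
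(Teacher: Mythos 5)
Your proposal takes essentially the same route as the paper's proof: your two straightening identities are exactly the paper's Lemma 5.2, and your extraction of the commutators from the poles of the prefactor $(1-w/z)^{(\a,\b)+(p(\a),p(\b))}(1+w/z)^{(\a,\b)-(p(\a),p(\b))}$ at $w=\pm z$, organized case by case through Lemma 3.2, is precisely what the paper carries out in its Lemmas 5.3--5.12 via contour integrals over $C_1-C_2$ (the formal-delta-function and contour-difference formalisms being equivalent), with your treatment of the Cartan and grading parts corresponding to the paper's Lemma 5.1 and Lemma 4.1. The only difference is presentational: the paper enumerates the root-type cases as separate lemmas with explicit integrands, while you package them into one unified prefactor and defer the sign bookkeeping; you also make the injectivity/surjectivity of $\pi$ explicit, which the paper leaves implicit.
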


\section{Proof of Theorem 4.3}
\begin{lemm}
$$[a(n),X_\frac{m}{2}(\a)]=(a,\a)X_{n+\frac{m}{2}}(\a),\quad m,n\in\ZZ.$$
\end{lemm}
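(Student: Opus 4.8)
The plan is to prove the identity at the level of the generating series, namely
$$[a(n),X(\a,z)]=z^{2n}(a,\a)\,X(\a,z),\qquad n\in\ZZ,\ a\in H,$$
and then to read off the Laurent coefficients. Indeed, writing $X(\a,z)=\sum_{m\in\ZZ}X_{m/2}(\a)z^{-m}$ and comparing the coefficient of $z^{-m}$ on both sides, multiplication by $z^{2n}$ shifts the index $m/2$ to $m/2+n$, so the series identity yields exactly $[a(n),X_{m/2}(\a)]=(a,\a)X_{n+m/2}(\a)$. Thus the whole statement reduces to the single operator identity above.

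To establish that identity I would decompose $X(\a,z)=(-1)^{-p_0(\a)}Y(\a,z)\,z^{(\a,\a)}e^\a z^{2\a}\ep_\a$ into its factors and commute $a(n)$ through each one, using as the only input the Heisenberg relation $[a(n),\a(k)]=\delta_{n+k,0}\,n(a,\a)\,c$ together with the fact that $c$ acts as the identity on $V(\cq)$. First I would note that $a(n)$ commutes with the half-integer factors $F^\pm(p(\a),z)$: these are exponentials in the modes $p(\a)(\pm(k-\frac{1}{2}))$ lying in $H_M$, and the bracket relations (7)--(8) provide no pairing between the integer modes of $H$ and the half-integer modes of $H_M$, so the two families commute. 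The diagonal operators $z^{(\a,\a)}$, $z^{2\a}$ and $\ep_\a$ are transparent to $a(n)$ as well, since $a(n)$ leaves the lattice factor $\CC[\cq]$ unchanged.

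It then remains to commute $a(n)$ through the $E^\pm$ factors and through $e^\a$. For $n\neq0$ only one of $E^-,E^+$ contributes: because $[a(n),\a(k)]$ is central, the standard identity $[A,e^B]=[A,B]e^B$ gives $[a(n),E^-(\a,z)]=z^{2n}(a,\a)E^-(\a,z)$ when $n>0$, and the analogous relation for $E^+$ when $n<0$, while $a(n)$ with $n\neq0$ acts only on the $S(\widehat{H^-})$ tensor factor and hence commutes with $e^\a$. For $n=0$ the situation is reversed: $a(0)$ commutes with every $E^\pm$ (the resonance condition $\delta_{n+k,0}$ never triggers for $k\neq0$), but the weight shift $e^\a(v\ot e^{r+\lambda})=v\ot e^{\a+r+\lambda}$ forces $[a(0),e^\a]=(a,\a)e^\a$. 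In either case the single surviving commutator produces the same scalar $z^{2n}(a,\a)$ (with $z^0=1$), which proves the series identity and hence the lemma.

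I expect the only delicate point to be the uniform treatment of the $n=0$ case, where the factor $(a,\a)$ must arise from the lattice shift $e^\a$ rather than from a mode resonance inside $E^\pm$. Keeping careful track of which single factor contributes the scalar, and confirming that the half-integer operators $F^\pm$ and the diagonal operators genuinely drop out, is the main bookkeeping; beyond that I anticipate no serious obstacle, since every commutator in sight is central.
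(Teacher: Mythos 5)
Your proposal is correct, and in fact the paper states this lemma with no proof at all (it is treated as a routine Heisenberg computation); your factor-by-factor argument --- commuting $a(n)$ through $E^{\pm}$, $e^{\a}$, the diagonal operators, and noting that the half-integer modes $F^{\pm}$ decouple, then extracting coefficients of $z^{-m}$ --- is exactly the standard calculation the paper implicitly relies on. The only cosmetic remark is that the diagonal factor $(-1)^{-p_0(\a)}$ should be listed alongside $z^{2\a}$ and $\ep_\a$ among the operators transparent to $a(n)$, but your reasoning covers it verbatim.
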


By analogy of the argument in Section 3.4 of [FLM], we easily obtain
by a direct calculation
\begin{lemm}
Suppose that $z$ and $w$ are
two complex variables. Then
$$E^+(a, z)E^-(b, w)
=z^{-2(a, b)}(z^2-w^2)^{(a, b)}E^-(b,w)E^+(a, z), $$
$$F^+(p(a), z)F^-(p(b), w)
=(z-w)^{(p(a) , p(b))} (z+w)^{-(p(a), p(b))}F^-(p(b), w)F^+(p(a), z),$$
for $|z|>|w|$ and $a,\; b\in\Delta_L\cup\Delta_M\cup\Delta_S\cup\{0\}$.
\end{lemm}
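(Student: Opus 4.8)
The plan is to treat both identities as instances of the Baker--Campbell--Hausdorff formula on the Heisenberg algebra $\widehat{H}$, exploiting that every commutator arising here is a scalar multiple of the central element $c$, which acts as the identity on $V(\cq)$. Recall that whenever $[A,B]$ commutes with both $A$ and $B$ one has $e^A e^B = e^{[A,B]} e^B e^A$. This reduces each claim to computing a single scalar commutator and summing one power series; the hypothesis $|z|>|w|$ is exactly what guarantees convergence of these series.

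For the first identity, write $E^+(a,z) = \exp(A)$ and $E^-(b,w) = \exp(B)$ with
$$A = -\sum_{n=1}^\infty \frac{z^{-2n}}{n}a(n), \qquad B = \sum_{m=1}^\infty \frac{w^{2m}}{m}b(-m).$$
Every mode in $A$ carries a positive index and every mode in $B$ a negative index, so the only surviving brackets are $[a(n),b(-m)] = \delta_{n,m}\,n\,(a,b)c$, and $[A,B]$ is a scalar. First I would compute
$$[A,B] = -(a,b)\sum_{n=1}^\infty \frac{1}{n}\Bigl(\frac{w^2}{z^2}\Bigr)^n = (a,b)\log\Bigl(1-\frac{w^2}{z^2}\Bigr),$$
using $-\log(1-x) = \sum_{n\ge1} x^n/n$ for $|w|<|z|$. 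Exponentiating gives $e^{[A,B]} = \bigl(1-w^2/z^2\bigr)^{(a,b)} = z^{-2(a,b)}(z^2-w^2)^{(a,b)}$, which is precisely the stated prefactor.

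For the second identity the same scheme applies with $F^+(p(a),z) = \exp(C)$ and $F^-(p(b),w) = \exp(D)$, where
$$C = -\sum_{n=1}^\infty \frac{2z^{-(2n-1)}}{2n-1}p(a)\Bigl(n-\tfrac12\Bigr), \qquad D = \sum_{m=1}^\infty \frac{2w^{2m-1}}{2m-1}p(b)\Bigl(-\bigl(m-\tfrac12\bigr)\Bigr).$$
The relevant bracket is now $[p(a)(n-\tfrac12), p(b)(-(m-\tfrac12))] = \delta_{n,m}(n-\tfrac12)(p(a),p(b))c$. After substituting $n-\tfrac12 = \tfrac{2n-1}{2}$, the numerical coefficients collapse to $2/(2n-1)$, leaving
$$[C,D] = -(p(a),p(b))\sum_{n=1}^\infty \frac{2}{2n-1}\Bigl(\frac{w}{z}\Bigr)^{2n-1} = -(p(a),p(b))\log\frac{z+w}{z-w},$$
where I invoke the odd logarithmic series $\sum_{n\ge1}\tfrac{2}{2n-1}x^{2n-1} = \log\frac{1+x}{1-x}$ with $x = w/z$. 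Exponentiating yields $(z-w)^{(p(a),p(b))}(z+w)^{-(p(a),p(b))}$, as required. Since $p$ is defined on all of $\cq$ and $\Delta_L\cup\Delta_M\cup\Delta_S\cup\{0\}\subset\cq$, the operators $F^\pm$ are well defined on each element in the stated range, and the case $a=0$ or $b=0$ is trivial as both sides reduce to a product of commuting exponentials.

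The calculation is essentially mechanical, and I expect no conceptual obstacle. The only step requiring care is the second series: one must correctly cancel the $(2n-1)$ denominators against the mode coefficient $n-\tfrac12$ and then recognize the resulting odd power series as $\log\frac{1+x}{1-x}$ rather than $-\log(1-x)$, which is what produces the two distinct factors $(z-w)$ and $(z+w)$ in the $F$-identity versus the single factor $(z^2-w^2)$ in the $E$-identity.
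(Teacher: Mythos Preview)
Your argument is correct and is precisely the standard Frenkel--Lepowsky--Meurman computation that the paper invokes (it states only ``by analogy of the argument in Section~3.4 of [FLM], we easily obtain by a direct calculation'' and gives no further details). Your explicit BCH treatment, including the identification of the odd series $\sum_{n\ge1}\frac{2}{2n-1}x^{2n-1}=\log\frac{1+x}{1-x}$, fills in exactly what the paper leaves implicit.
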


For complex variables $z,w$, in this paper, $C_1$ means the field
such that $|z|>|w|$ and $C_2$ means the field such that $|z|<|w|$.

\begin{lemm}
For $\a\in\Delta_S$,
$$[X_\frac{m}{2}(\a), X_\frac{n}{2}(-\a)]=-2\ep(\a,-\a)(\delta_{m+n,0}m+2\a(z)).$$
\end{lemm}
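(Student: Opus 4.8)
The plan is to obtain the commutator by the standard vertex-operator technique: I realize $[X(\a,z),X(-\a,w)]$ as the difference between the expansion of a single rational-operator expression in the two regions $C_1$ and $C_2$, and then read off the modes. First I would write out $X(\a,z)$ and $X(-\a,w)$ from their definitions, recording that for $\a\in\Delta_S$ one has $(\a,\a)=\frac{1}{2}$, $p(-\a)=-p(\a)$, $p_0(-\a)=-p_0(\a)$, and, from the inner products of $\b$, that $(p(\a),p(\a))=\frac{3}{2}$; each operator then carries a factor $\sqrt{-1}$, a product of the form $E^-E^+F^-F^+$, and the lattice part $z^{(\a,\a)}e^\a z^{2\a}\ep_\a$.

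Next I would form the product $X(\a,z)X(-\a,w)$ in the region $C_1$ and normal-order it using the operator product expansions of the preceding lemma. The contraction of $E^+(\a,z)$ with $E^-(-\a,w)$ contributes $z(z^2-w^2)^{-1/2}$ (since $(\a,-\a)=-\frac{1}{2}$), while that of $F^+(p(\a),z)$ with $F^-(-p(\a),w)$ contributes $(z-w)^{-3/2}(z+w)^{3/2}$ (since $(p(\a),-p(\a))=-\frac{3}{2}$). The crucial simplification is that, on writing $(z^2-w^2)^{-1/2}=(z-w)^{-1/2}(z+w)^{-1/2}$, all the fractional powers collapse and the combined singular prefactor becomes the single-valued rational function $(z-w)^{-2}(z+w)$. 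I would then commute the lattice operators $z^{2\a}e^\a$ past $e^{-\a}w^{-2\a}$: the charges cancel, $e^\a e^{-\a}=1$, the cocycle operators produce the scalar $\ep(\a,-\a)$, and the leftover monomials in $z,w$ are absorbed into the prefactor.

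The product $X(-\a,w)X(\a,z)$, computed in $C_2$, has by the same calculation with $z$ and $w$ interchanged the identical rational form $(z-w)^{-2}(z+w)$ times the same normal-ordered operator, but expanded in the opposite region. The commutator is therefore the difference of the two expansions, and the formal delta-function identities convert $(z-w)^{-2}$ into a combination of $\delta$ and $\delta'$ supported on $z=w$. Taylor-expanding the operator part about $z=w$, the $\delta'$-term isolates the central and degree contribution $\delta_{m+n,0}m$, while the $\delta$-term isolates the Cartan current, which by the relation $[a(n),X_{m/2}(\a)]=(a,\a)X_{n+m/2}(\a)$ is exactly $\a$ in the appropriate mode. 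Collecting the overall constant $\sqrt{-1}\cdot\sqrt{-1}\cdot(-1)^{-p_0(\a)-p_0(-\a)}\cdot\ep(\a,-\a)=-\ep(\a,-\a)$, together with the factor $2$ coming from the prefactor $(z+w)$ evaluated on the support $z=w$, yields the coefficient $-2\ep(\a,-\a)$, and reading off $z^{-m}w^{-n}$ gives the stated identity.

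I expect the main obstacle to be the careful bookkeeping of the fractional powers: the factors $(z^2-w^2)^{-1/2}$ and $(z-w)^{-3/2}(z+w)^{3/2}$ are individually multivalued, and one must check that the branch of $(z^2-w^2)^{1/2}$ is chosen consistently across $C_1$ and $C_2$, so that their product is genuinely the single-valued $(z-w)^{-2}(z+w)$ and so that the two region-expansions differ by precisely the delta distribution. This is exactly where the normalizing factor $\sqrt{-1}$ in the definition of $Y(\a,z)$ for $\a\in\Delta_S$ earns its place, and tracking it consistently through both orderings is the delicate step.
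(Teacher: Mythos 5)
Your proposal is correct and takes essentially the same route as the paper: the paper's proof writes $[X_{\frac{m}{2}}(\a), X(-\a,w)]$ as the contour integral $-\frac{1}{2\pi\sqrt{-1}}\int_{C_1-C_2}\ep(\a,-\a)\,z^{m-1}\frac{z(z+w)}{(z-w)^2}X(\a,-\a,z,w)(zw^{-1})^{2\a-\frac{1}{2}}dz$, which is precisely your ``difference of the two region-expansions'' of the normal-ordered product with the collapsed rational prefactor $\frac{z(z+w)}{(z-w)^2}$, evaluated by the residue at $z=w$ to give $-2\ep(\a,-\a)w^m(m+2\a(w))$. Your bookkeeping of the two $\sqrt{-1}$ factors (producing the overall minus sign), the OPE exponents $(\a,-\a)=-\frac{1}{2}$ and $(p(\a),p(-\a))=-\frac{3}{2}$, and the factor $2$ from $(z+w)$ on the diagonal all agree with the paper's computation.
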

\begin{proof}
\begin{eqnarray*}
&&{[X_\frac{m}{2}(\a), X(-\a,w)]}\\
&&=-\frac{1}{2\pi\sqrt{-1}}\int_{C_1-C_2}
\ep(a,-a)z^{m-1}\frac{z(z+w)}{(z-w)^2}X(a,-a,z,w)(zw^{-1})^{2a-\frac{1}{2}}dz\\
&&=-2\ep(a,-a)w^m(m+2\a(w)),
\end{eqnarray*}
so it is true. \end{proof}

\begin{lemm}
For $\a\in\Delta_M$,
$$[X_\frac{m}{2}(\a), X_\frac{n}{2}(-\a)]=\ep(\a,-\a)(\delta_{m+n,0}m+2\a(z)).$$
\end{lemm}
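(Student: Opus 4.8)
The plan is to rerun the computation of Lemma~5.3, changing only the length data and discarding the factor $\sqrt{-1}$. For $\a\in\Delta_M$ one has $(\a,\a)=1$, and by Lemma~3.2(2), $p(\a)=p_0(\a)=\pm(\a_i+\cdots+\a_j)$ with $j<l$; as a root of the $A_{l-1}$ spanned by $\a_1,\dots,\a_{l-1}$ it satisfies $(p(\a),p(\a))=1$ and $p(-\a)=-p(\a)$. In particular $p(\a)\neq0$ while $p(\a)\notin\Delta_S$, so $Y(\a,z)=E^-(\a,z)E^+(\a,z)F^-(p(\a),z)F^+(p(\a),z)$ carries \emph{no} factor $\sqrt{-1}$; this is the one feature responsible for the change of overall constant between the two lemmas.

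First I would form the operator product $X(\a,z)X(-\a,w)$ for $|z|>|w|$. Since in $\widetilde H$ the integer modes $\a(\pm n)$ commute with the half-integer modes $p(\a)(\pm(n-\frac12))$ (only like modes pair under (7)--(8)), the $E$- and $F$-factors contract independently through Lemma~5.2: with $(\a,-\a)=-1$ the $E$-part gives $z^{2}(z^{2}-w^{2})^{-1}$, and with $(p(\a),p(-\a))=-(p(\a),p(\a))=-1$ the $F$-part gives $(z+w)(z-w)^{-1}$, whose product is $z^{2}/(z-w)^{2}$. The prefactors $(-1)^{-p_0(\pm\a)}$ cancel, and commuting the lattice operators $e^{\pm\a}$, $z^{2\a}$, $\ep_{\pm\a}$ past one another, together with the bimultiplicativity of $\ep$ (definition (16)) in the form $\ep(\a,r-\a)\ep(-\a,r)=\ep(\a,-\a)$, I expect
\[
X(\a,z)X(-\a,w)=\ep(\a,-\a)\,\frac{z^{2}}{(z-w)^{2}}\,X(\a,-\a,z,w)\,(zw^{-1})^{2\a-1},\qquad |z|>|w|,
\]
and the companion expression for $|z|<|w|$. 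Both are the expansions, in the respective domains, of the single operator-valued rational function $\ep(\a,-\a)\,\frac{zw}{(z-w)^{2}}:Y(\a,z)Y(-\a,w):(zw^{-1})^{2\a}$, whose regular factor $\mathcal O(z,w)=\,:Y(\a,z)Y(-\a,w):\,(zw^{-1})^{2\a}$ restricts to $\id$ on the diagonal $z=w$.

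Next I would extract the mode exactly as in Lemma~5.3,
\[
[X_{\frac m2}(\a),X(-\a,w)]=\frac{1}{2\pi\sqrt{-1}}\int_{C_1-C_2} z^{m-1}\,\ep(\a,-\a)\,\frac{z^{2}}{(z-w)^{2}}\,X(\a,-\a,z,w)\,(zw^{-1})^{2\a-1}\,dz,
\]
where the integrand now has an honest double pole at $z=w$ and no pole at $z=-w$. The residue of $z^{m}w(z-w)^{-2}\mathcal O(z,w)$ at $z=w$ equals $m\,w^{m}\id+w^{m+1}\,\partial_z\mathcal O|_{z=w}$, and expanding each factor of $\mathcal O$ to first order about $z=w$, matched against the defining series $\a(w)=\sum_{n}\a(n)w^{-2n}+\sum_{n}p(\a)(n-\frac12)w^{-2n+1}$, gives $\partial_z\mathcal O|_{z=w}=\frac{2}{w}\a(w)$. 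Hence
\[
[X_{\frac m2}(\a),X(-\a,w)]=\ep(\a,-\a)\,w^{m}\bigl(m+2\a(w)\bigr),
\]
and reading off the coefficient of $w^{-n}$ yields the stated bracket.

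The step I expect to be the main obstacle is the diagonal identity $\partial_z\mathcal O|_{z=w}=\frac{2}{w}\a(w)$, for this is where the Cartan current on the right-hand side is produced. It requires collecting the linear-in-$(z-w)$ contributions of $E^-(\a,z)E^-(-\a,w)$, of $E^+(\a,z)E^+(-\a,w)$, of the two $F$-pairs, and of the lattice factor $(zw^{-1})^{2\a}$, and then recognizing that their sum reassembles precisely $\frac{2}{w}\a(w)$. Everything else is the bookkeeping of Lemma~5.3; the genuinely new point is that here both $(\a,\a)$ and $(p(\a),p(\a))$ are integers, so the contraction is a true double pole with no surviving square-root factor and no $\sqrt{-1}$, which is exactly why the overall constant is $\ep(\a,-\a)$ rather than the $-2\,\ep(\a,-\a)$ of the short-root case.
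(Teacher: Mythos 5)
Your proposal is correct and follows essentially the same route as the paper: you contract the $E$- and $F$-factors via Lemma~5.2 to get exactly the paper's kernel $\frac{z^{2}}{(z-w)^{2}}$, form the same contour integral over $C_1-C_2$, and evaluate the residue at the double pole $z=w$, obtaining $\ep(\a,-\a)w^{m}\bigl(m+2\a(w)\bigr)$. The only difference is one of detail, not of method: you make explicit the diagonal computation $\mathcal{O}(w,w)=\id$ and $\partial_z\mathcal{O}|_{z=w}=\frac{2}{w}\a(w)$, which the paper's one-line proof leaves implicit.
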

\begin{proof}\begin{eqnarray*}
&&{[X_\frac{m}{2}(\a), X(-\a,w)]}\\
&&=\frac{1}{2\pi\sqrt{-1}}\int_{C_1-C_2}
\ep(a,-a)z^{m-1}\frac{z^2}{(z-w)^2}X(a,-a,z,w)(zw^{-1})^{2a-1}dz\\
&&=\ep(a,-a)w^m(m+2\a(w)),
\end{eqnarray*}
that is, the lemma holds. \end{proof}

\begin{lemm}
For $\a\in\Delta_L$,
$$[X_\frac{m}{2}(\a), X_\frac{n}{2}(-\a)]=\frac{1}{2}\ep(\a,-\a)(\delta_{m+n,0}m+2\a(z)).$$
\end{lemm}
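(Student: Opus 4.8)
The plan is to follow the same contour-integral scheme that produced Lemmas 5.3 and 5.4, exploiting the simplification special to $\Delta_L$. First I would invoke Lemma 3.2(1): for $\a\in\Delta_L$ one has $p(\a)=p_0(\a)=0$, so that $Y(\a,z)=E^-(\a,z)E^+(\a,z)$ carries \emph{no} $F$-factors, and likewise for $Y(-\a,w)$. Thus $X(\a,z)=E^-(\a,z)E^+(\a,z)z^{(\a,\a)}e^\a z^{2\a}\ep_\a$ with $(\a,\a)=2$, and the whole computation collapses to reordering Heisenberg exponentials through the single $E$-relation of Lemma 5.2, with no $F$-reordering at all.

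Next I would normal-order the product $X(\a,z)X(-\a,w)$ on $C_1$ (the region $|z|>|w|$). Moving $E^+(\a,z)$ past $E^-(-\a,w)$ by the first identity of Lemma 5.2 yields the factor $z^{2(\a,\a)}(z^2-w^2)^{-(\a,\a)}$, while commuting the group-algebra operators $e^{\pm\a},z^{2\a},w^{-2\a},\ep_{\pm\a}$ into order produces the cocycle sign $\ep(\a,-\a)$ (the $\ep(\pm\a,r)$ contributions cancelling by bilinearity) together with the operator $(zw^{-1})^{2\a}$, whose exponent $2(\a,r+\lambda)$ is an \emph{odd} integer on each $e^{r+\lambda}$ since $(\a,\lambda)=\pm\frac12$ and $(\a,r)\in\ZZ$. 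Collecting the scalar powers and using $(\a,\a)=2$ gives
\begin{equation*}
X(\a,z)X(-\a,w)=\ep(\a,-\a)\,\frac{z^2w^2}{(z^2-w^2)^2}\,X(\a,-\a,z,w)\,(zw^{-1})^{2\a},
\end{equation*}
with $X(\a,-\a,z,w)=\;:Y(\a,z)Y(-\a,w):\;$ regular at $z^2=w^2$. The \emph{distinctive} feature of the $\Delta_L$ case is visible here: with no $F$-operators to supply a compensating $(z+w)$-factor, the denominator is $(z^2-w^2)^2=(z-w)^2(z+w)^2$, so the integrand carries genuine double poles at \emph{both} $z=w$ and $z=-w$, unlike Lemmas 5.3 and 5.4 where only $z=w$ survives.

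Finally, exactly as in the previous two lemmas, the commutator is realized as
\begin{equation*}
[X_\frac{m}{2}(\a),X(-\a,w)]=\frac{1}{2\pi\sqrt{-1}}\int_{C_1-C_2}\ep(\a,-\a)\,z^{m-1}\,\frac{z^2w^2}{(z^2-w^2)^2}\,X(\a,-\a,z,w)\,(zw^{-1})^{2\a}\,dz,
\end{equation*}
and I would evaluate it by summing the residues at the two double poles $z=\pm w$. Each pole contributes one term from differentiating $z^{m-1}$ (feeding the $\delta_{m+n,0}m$ after mode extraction) and one from differentiating $X(\a,-\a,z,w)$ together with $(zw^{-1})^{2\a}$ (feeding the $2\a(w)$ term). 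Because the exponent $2(\a,r+\lambda)$ is odd, the residue at $z=-w$ is tied by a definite sign to the one at $z=w$, and the two combine—consistently with the even ($z^2$-only) structure of $E^\pm(\a,z)$ reflected in the $\frac12$ of Lemma 4.1—to give the clean coefficient $\frac12\,\ep(\a,-\a)$; no $\sqrt{-1}$ enters (in contrast with the $\Delta_S$ case), so there is no sign reversal, in keeping with the pattern $\pm1/(\a,\a)$ already seen in Lemmas 5.3 and 5.4. Extracting the coefficient of $w^{-n}$ then yields the asserted bracket. I expect the only real obstacle to be this two-pole residue bookkeeping—pinning down the relative sign of the $z=-w$ contribution and thereby the exact constant $\frac12$—since the reordering in the first two paragraphs is routine once Lemma 5.2 is in hand.
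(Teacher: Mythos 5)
Your proposal is correct and takes essentially the same route as the paper's proof: use $p(\a)=0$ so no $F$-factors appear, normal-order $X(\a,z)X(-\a,w)$ via Lemma 5.2, realize the bracket as $\frac{1}{2\pi\sqrt{-1}}\int_{C_1-C_2}$ of $z^{m-1}$ times the normal-ordered product, and evaluate the double poles at $z=\pm w$. If anything, your bookkeeping is tighter than the paper's: the paper's displayed integrand $\frac{z^4}{(z-w)^2(z+w)^2}(zw^{-1})^{2a-1}$ differs from the correct one (yours, $\frac{z^2w^2}{(z^2-w^2)^2}(zw^{-1})^{2\a}$) by a stray factor $zw^{-1}$, and its final line omits the factor $\frac{1}{2}$ required by the statement, whereas your pairing of the $z=w$ and $z=-w$ residues via the odd exponent $2(\a,r+\lambda)$ is exactly what produces that $\frac{1}{2}$ (the sum being nonzero precisely for the odd modes that actually occur when $\a\in\Delta_L$).
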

\begin{proof}\begin{eqnarray*}
&&{[X_\frac{m}{2}(\a), X(-\a,w)]}\\
&&=\frac{1}{2\pi\sqrt{-1}}\int_{C_1-C_2}
\ep(a,-a)z^{m-1}\frac{z^4}{(z-w)^2(z+w)^2}X(a,-a,z,w)(zw^{-1})^{2a-1}dz\\
&&=\ep(a,-a)w^m(m+2\a(w)),
\end{eqnarray*}
hence, the lemma holds. \end{proof}

\begin{lemm}
 If $a,b\in\Delta_S$ and $a+b\in\Delta_M$, then
$$[X_\frac{m}{2}(a), X_\frac{n}{2}(b)]=-2\ep(a,b)X_\frac{m+n}{2}(a+b),$$
when $p_0(a+b)=a+b$ and
$$[X_\frac{m}{2}(a), X_\frac{n}{2}(b)]=2\sqrt{-1}\ep(a,b)X_\frac{m+n}{2}(a+b),$$
when $p_0(a+b)=a+b$.
\end{lemm}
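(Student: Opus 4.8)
The plan is to compute the commutator by the standard vertex-operator technique used for Lemmas~5.3--5.5 and in Section~3.4 of [FLM]: form the two operator products $X(a,z)X(b,w)$ and $X(b,w)X(a,z)$, recognize them as the expansions in $C_1$ and $C_2$ of a single $\End(V(\cq))$-valued rational expression, and read off $[X_\frac{m}{2}(a),X(b,w)]$ as the contour integral $\frac{1}{2\pi\sqrt{-1}}\int_{C_1-C_2}z^{m-1}(\cdots)\,dz$, which localizes at the poles in $z$. The two cases in the statement will be separated by the value of $p_0(a+b)$.

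First I would record the pairings that control everything. Since $a,b\in\Delta_S$ and $a+b\in\Delta_M$, expanding $(a+b,a+b)=(a,a)+2(a,b)+(b,b)=\frac12+2(a,b)+\frac12=1$ forces $(a,b)=0$. For the twisted pairing I would use $\beta=\frac{c+d}{\sqrt2}+\a_l$, which gives $(\frac{c+d}{\sqrt2},\frac{c+d}{\sqrt2})=1$ and $(\frac{c+d}{\sqrt2},\a_i)=0$, so that writing $p(\a)=p_0(\a)+\eta_\a\,\frac{c+d}{\sqrt2}$ with $\eta_\a\in\{\pm1\}$ the reduced $\a_l$-coefficient of $\a$ yields $(p(a),p(b))=(p_0(a),p_0(b))+\eta_a\eta_b=\eta_a\eta_b=\pm1$, using $p_0(a)=a,\ p_0(b)=b$ from Lemma~3.2(3). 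A short check shows $(p(a),p(b))=-1$ exactly when $p_0(a+b)=a+b$ (where moreover $p(a)+p(b)=p(a+b)$), and $(p(a),p(b))=+1$ otherwise (where $p(a)+p(b)-p(a+b)$ is twice the $p$-image of a short root).

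Next, because $(a,b)=0$ the $E$-contribution in Lemma~5.2 is trivial, so the only nonconstant OPE factor is $(z-w)^{(p(a),p(b))}(z+w)^{-(p(a),p(b))}$ coming from the $F$-operators; together with the zero-mode powers from $z^{(a,a)}e^az^{2a}\ep_a$ this identifies $X(a,z)X(b,w)$ with the normal-ordered operator $X(a,b,z,w)$ of $(21)$ times an explicit rational function. Hence the product has a simple pole at $z=w$ when $(p(a),p(b))=-1$ and at $z=-w$ when $(p(a),p(b))=+1$, and the commutator integral collapses to the residue of $X(a,b,z,w)$ there. Evaluating that residue requires the coincidence limit of $X(a,b,z,w)$ to be a scalar multiple of $X(a+b,w)$; this is where the non-additivity of $p$ enters, and it is the main obstacle.

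To carry out the evaluation I would use the parity identities $E^\pm(\a,-w)=E^\pm(\a,w)$ and $F^\pm(p(\a),-w)=F^\pm(-p(\a),w)$, immediate from the definitions since $E^\pm$ carry even and $F^\pm$ odd powers of the variable. Combined with $E^\pm(a,\cdot)E^\pm(b,w)=E^\pm(a+b,w)$ and the $p$-defect computed above, one must check that the $F$-parts reorganize so that $:Y(a,\cdot)Y(b,w):$ at coincidence is a scalar times $Y(a+b,w)$ (the sign appearing in the $F$-argument being absorbed by the zero-mode factors $e^{a+b}w^{a+b}$), whence the residue is a scalar times $X(a+b,w)$ and the $z^{m-1}$-coefficient gives $X_\frac{m+n}{2}(a+b)$. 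The delicate point is that scalar: the two short-root operators each carry a $\sqrt{-1}$ whereas $Y(a+b,w)$ (with $a+b\in\Delta_M$) carries none, contributing $(\sqrt{-1})^2=-1$; when $p_0(a+b)=a+b$ the pole sits at $z=w$ and this yields the real coefficient $-2\ep(a,b)$, while when $p_0(a+b)\neq a+b$ the pole sits at $z=-w$ and the half-integer factor $(zw^{-1})^{2a-\frac12}$ in the integrand contributes $(-1)^{-1/2}=\pm\sqrt{-1}$, turning the coefficient into $2\sqrt{-1}\,\ep(a,b)$. The antisymmetry $\ep(a,b)=-\ep(b,a)$ of Lemma~3.1 then secures consistency of the bracket, and the remaining $w$-power bookkeeping that pins the mode index to $\frac{m+n}{2}$ proceeds exactly as in Lemmas~5.3--5.5.
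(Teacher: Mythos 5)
Your proposal follows essentially the same route as the paper's own proof: the same pairing computations ($(a,b)=0$, and $(p(a),p(b))=-1$ or $+1$ according as $p_0(a+b)=a+b$ or not), the same contour-integral evaluation of $[X_{\frac{m}{2}}(a),X(b,w)]$ over $C_1-C_2$ with the simple pole at $z=w$ in the first case and at $z=-w$ in the second, and the same resulting constants $-2\ep(a,b)$ and $2\sqrt{-1}\,\ep(a,b)$. If anything, your treatment of the coincidence limit (the parity identities for $E^\pm,F^\pm$, the non-additivity of $p$, and the origin of the $\sqrt{-1}$ from the half-integer power at $z=-w$) is more explicit than the paper, which merely writes down the integral and asserts its value.
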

\begin{proof}(A) if $p_0(a+b)=a+b$, then $(a,b)=0, (p(a),p(b))=-1$, so
\begin{eqnarray*}
&&{[X_\frac{m}{2}(a), X(b,w)]}\\
&&=-\frac{1}{2\pi\sqrt{-1}}\int_{C_1-C_2}\ep(a,b)z^{m-1}\frac{z+w}{z-w}X(a,b,z,w)(zw^{-1})^{2a+\frac{1}{2}}dz\\
&&=-2\ep(a,b)w^mX(a+b,w).
\end{eqnarray*}
(B) if not, we can assume that $p_0(a+b)=b-a$, then $(p(a),p(b))=1$, so
\begin{eqnarray*}
&&{[X_\frac{m}{2}(a), X(b,w)]}\\
&&=-\frac{1}{2\pi\sqrt{-1}}\int_{C_1-C_2}\ep(a,b)z^{m-1}(-1)^{-2a}\frac{z-w}{z+w}X(a,b,z,w)(zw^{-1})^{2a+\frac{1}{2}}dz\\
&&=2\sqrt{-1}\ep(a,b)w^mX(a+b,w),
\end{eqnarray*}
hence, this lemma holds. \end{proof}

\begin{lemm}
 If $a,b\in\Delta_S$ and $a+b\in\Delta_L$, then $a=b$ and
$$[X_\frac{m}{2}(a), X_\frac{n}{2}(a)]=(-1)^{m}4\sqrt{-1}\ep(a,a)X_\frac{m+n}{2}(2a).$$
\end{lemm}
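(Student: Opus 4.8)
The plan is first to pin down the roots. Since $a,b\in\Delta_S=\dot\Delta_S$ are short roots of $B_l$ while $a+b\in\Delta_L=\{2\r:\r\in\dot\Delta_S\}$, writing $a=\pm e_i$, $b=\pm e_j$ forces $a+b=\pm e_i\pm e_j$ to equal some $\pm 2e_k$; this occurs only when $i=j$ with equal signs, so $a=b$ and $a+b=2a$. Thus it suffices to compute $[X_{\frac{m}{2}}(a),X(a,w)]$ for a single short root $a$ and then extract the appropriate Laurent coefficient, exactly as in Lemmas 5.3--5.6.

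Next I would compute the radially ordered product $X(a,z)X(a,w)$ on the region $C_1$ via Lemma 5.2, with the short-root data $(a,a)=\tfrac12$ and $(p(a),p(a))=\tfrac32$ (both read off by matching the $\Delta_S$ case of Lemma 5.3). The $E$-part contributes $z^{-1}(z^2-w^2)^{1/2}$ and the $F$-part contributes $(z-w)^{3/2}(z+w)^{-3/2}$; since both $Y(a,z)$ and $Y(a,w)$ carry the factor $\sqrt{-1}$ attached to short roots, these recombine (together with the lattice powers coming from $z^{(a,a)}e^az^{2a}\ep_a$) into an identity of the essential shape
$$X(a,z)X(a,w)=-\frac{(z-w)^2}{z(z+w)}\,X(a,a,z,w)\,(zw^{-1})^\nu$$
for a half-integer exponent $\nu$, the remaining scalar powers of $z,w$ being absorbed into the lattice normalization. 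The decisive feature, distinguishing this case from Lemmas 5.3--5.5, is that the numerator $(z-w)^2$ cancels the potential pole at $z=w$, so that the only singularity separating $C_1$ and $C_2$ is the simple pole at $z=-w$.

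I would then write the bracket as $\frac{1}{2\pi\sqrt{-1}}\oint_{C_1-C_2}z^{m-1}(\cdots)\,dz$ and evaluate it by the residue at $z=-w$. At $z=-w$ three things occur. First, using $E^\pm(a,-w)=E^\pm(a,w)$ and $F^\pm(p(a),-w)=F^\pm(p(a),w)^{-1}$, the $F$-operators cancel in pairs while the $E$-operators fuse, so that $:Y(a,z)Y(a,w):$ collapses to $(\sqrt{-1})^2E^-(2a,w)E^+(2a,w)=-Y(2a,w)$ (recall $p(2a)=0$ by Lemma 3.2(1)); hence $X(a,a,-w,w)=-X(2a,w)$. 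Second, $z^{m-1}|_{z=-w}=(-1)^{m-1}w^{m-1}$ supplies the sign $(-1)^m$. Third, the half-integer part of $(zw^{-1})^\nu$ evaluated at $z=-w$ produces $(-1)^{1/2}=\sqrt{-1}$ (the same mechanism that yields the lone $\sqrt{-1}$ in Lemma 5.6(B)), while the double zero $(z-w)^2|_{z=-w}=4w^2$ supplies the constant $4$. Collecting these together with the $\ep(a,a)$ coming from $\ep_a\ep_a$ gives
$$[X_{\frac{m}{2}}(a),X(a,w)]=(-1)^m\,4\sqrt{-1}\,\ep(a,a)\,w^m\,X(2a,w),$$
and extracting the coefficient of $w^{-n}$, invoking the $\Delta_L$ mode expansion of Lemma 4.2 exactly as for Lemma 5.6, yields the asserted identity $[X_{\frac{m}{2}}(a),X_{\frac{n}{2}}(a)]=(-1)^m4\sqrt{-1}\,\ep(a,a)X_{\frac{m+n}{2}}(2a)$.

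The routine part is the operator-product bookkeeping, identical in spirit to the earlier lemmas. The main obstacle I anticipate is the careful sign analysis at the pole $z=-w$: one must fix a branch of the half-integer powers $(z^2-w^2)^{1/2}$ and $(zw^{-1})^\nu$ consistently across $C_1$ and $C_2$, and verify that evaluating them at the negative point $z=-w$ (rather than at $z=w$, as in Lemmas 5.3--5.6) produces exactly $(-1)^m\sqrt{-1}$ and not its negative or its conjugate. This is precisely the step in which the doubling $a+b=2a$ manifests itself, and where a sign slip would turn $\sqrt{-1}$ into $-\sqrt{-1}$.
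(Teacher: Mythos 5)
Your proposal is correct and follows essentially the same route as the paper: both express $[X_{\frac{m}{2}}(a),X(a,w)]$ as a contour integral over $C_1-C_2$ of the radially ordered product, which by the OPE (Lemma 5.2 with $(a,a)=\frac12$, $(p(a),p(a))=\frac32$) has kernel $\frac{(z-w)^2}{z(z+w)}$, and then evaluate the single residue at $z=-w$ to obtain $(-1)^m4\sqrt{-1}\,\ep(a,a)w^mX(2a,w)$, finishing with the half-odd-integer mode expansion of $X(2a,\cdot)$. Your write-up actually supplies more detail than the paper (the root-combinatorial proof that $a=b$, and the explicit collapse $X(a,a,-w,w)=-X(2a,w)$ with the origin of the factors $4$, $(-1)^m$ and $\sqrt{-1}$), which the paper leaves as an unstated direct calculation.
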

\begin{proof} Now
\begin{eqnarray*}
&&{[X_\frac{m}{2}(a), X(a,w)]}\\
&&=-\frac{1}{2\pi\sqrt{-1}}\int_{C_1-C_2}
\ep(a,b)z^{m-1}(-1)^{2a}\frac{(z-w)^2}{z(z+w)}X(a,b,z,w)(zw^{-1})^{2a+\frac{3}{2}}dz\\
&&=(-1)^m4\sqrt{-1}\ep(a,b)w^mX(2a,w),
\end{eqnarray*}
so this lemma is true. Note that the coefficient of bracket is not zero
if and only $m+n\in2\ZZ+1$, this coincides with that
$$X(2a,z)=\sum_{n\in\ZZ}X_{n+\frac{1}{2}}(2a)z^{-2n-1}.$$
\end{proof}

\begin{lemm}
 If $b\in\Delta_M$ and $a,\, a+b\in\Delta_S$, then
$$[X_\frac{m}{2}(a), X_\frac{n}{2}(b)]=\ep(a,b)X_\frac{m+n}{2}(a+b).$$
\end{lemm}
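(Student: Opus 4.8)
The plan is to imitate the contour-integral computation already used in Lemmas~5.3--5.7 (and in Section~3.4 of [FLM]): I would first form the operator product $X(a,z)X(b,w)$ on the region $C_1$ where $|z|>|w|$, extract its scalar rational prefactor, and then recover the bracket $[X_{\frac{m}{2}}(a),X(b,w)]$ as the jump between the expansions on $C_1$ and $C_2$, realized by the contour integral $-\frac{1}{2\pi\sqrt{-1}}\int_{C_1-C_2}z^{m-1}(\cdots)\,dz$ exactly as in the preceding proofs.

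The first concrete step is the reordering. Since $a\in\Delta_S$ we have $Y(a,z)=\sqrt{-1}\,E^-(a,z)E^+(a,z)F^-(p(a),z)F^+(p(a),z)$, while $b\in\Delta_M$ gives $Y(b,w)=E^-(b,w)E^+(b,w)F^-(p(b),w)F^+(p(b),w)$ with no factor $\sqrt{-1}$; because $a+b\in\Delta_S$ the target operator $X(a+b,w)$ again carries exactly one factor $\sqrt{-1}$, so the two occurrences match and the resulting coefficient will be real. Using Lemma~5.2 to pull $E^+(a,z)$ past $E^-(b,w)$ and $F^+(p(a),z)$ past $F^-(p(b),w)$, and combining with the monomial contributions of $z^{(a,a)}$ and $z^{2a}e^a\ep_a$, I obtain a scalar rational function times the normal-ordered product $X(a,b,z,w)$ of~(23).

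To pin down that rational function I need the two inner products. Working in the standard $B_l$ realization, $a,a+b\in\Delta_S$ and $b\in\Delta_M$ force $(a,a)=(a+b,a+b)=\tfrac12$ and $(b,b)=1$, hence $(a,b)=-\tfrac12$; and reading $p(a),p(b)$ off Lemma~3.2 ($p(a)=\pm(\a_i+\cdots+\a_{l-1}+\b)$, $p(b)=\pm(\a_{i'}+\cdots+\a_{j'})$ with $j'<l$) gives $(p(a),p(b))=\mp\tfrac12$, the sign depending on the relative position of $a$ and $b$. In the branch $(p(a),p(b))=-\tfrac12$ the $E$- and $F$-prefactors collapse to the single pole $\frac{z}{z-w}$, whose residue at $z=w$ supplies one factor $w$; here $p_0(a+b)=p_0(a)+p_0(b)$, so the sign prefactors $(-1)^{-p_0}$ match with no correction, $X(a,b,w,w)$ reduces to $X(a+b,w)$ by~(22)--(23), and the integral produces $\ep(a,b)\,w^m X(a+b,w)$; reading off the coefficient of $w^{-n}$ yields the claim.

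The main obstacle is the complementary branch $(p(a),p(b))=+\tfrac12$ (e.g.\ when $p_0(a)+p_0(b)-p_0(a+b)=2a$), where the prefactors instead give a pole at $z=-w$ together with the sign factor $(-1)^{-2a}$, precisely as in case~(B) of Lemma~5.6. There I would move the contour to $z=-w$, carefully track the signs coming from $(-1)^{-2a}$, from evaluating $z^{m-1}$ and the half-integer power $(zw^{-1})^{\bullet}$ at $z=-w$, and from the cocycle antisymmetry of Lemma~3.1, and check that---unlike the two-valued answer of Lemma~5.6, where the unbalanced $\sqrt{-1}$-count forced a real answer in one case and an imaginary one in the other---here the balanced $\sqrt{-1}$-count (one factor on each side) makes $(-1)^{-2a}$ a genuine $\pm1$ sign that, together with the cocycle, recombines into the \emph{same} coefficient $\ep(a,b)$. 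Showing that both branches collapse to the single value $\ep(a,b)\,X_{\frac{m+n}{2}}(a+b)$, i.e.\ that the $\b$-bookkeeping in $p(a)$ (absent from $p(b)$) does not spoil the match, is where the genuine work lies.
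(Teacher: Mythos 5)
Your contour-integral strategy is exactly the paper's, and your branch $(p(a),p(b))=-\frac{1}{2}$ reproduces the paper's entire proof (prefactor $\frac{z}{z-w}$, residue at $z=w$, coefficient $\ep(a,b)$). You are also right that the paper's opening assertion --- that this case always has $(p(a),p(b))=-\frac{1}{2}$ and $p_0(a+b)=p_0(a)+p_0(b)$ --- is not automatic: for $l=2$ take $a=\alpha_2\in\Delta_S$, $b=-\alpha_1-2\alpha_2\in\Delta_M$, $a+b=-\alpha_1-\alpha_2\in\Delta_S$; then $p(a)=\beta$, $p(b)=-\alpha_1$, so $(p(a),p(b))=-(\beta,\alpha_1)=+\frac{1}{2}$ and $p_0(a)+p_0(b)-p_0(a+b)=2a$. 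This is precisely your second branch, which the paper's proof silently excludes, so on this point your case analysis is an improvement on the paper. However, your proposal does not close that branch: you only describe the $z=-w$ residue and conjecture that the coefficient again comes out to $\ep(a,b)$. That conjecture needs computation, not optimism. In the two genuinely two-branch lemmas the paper does work out (Lemma 5.6 and Lemma 5.10), the second branch produces a \emph{different} coefficient ($2\sqrt{-1}\,\ep(a,b)$ instead of $-2\ep(a,b)$, respectively an extra $(-1)^m$), and Lemma 5.11 even acquires a parity-dependent factor; here the evaluation of $z^{m-1}$ at the pole $z=-w$ contributes $(-1)^{m-1}$, which can only be absorbed through the operator-valued signs $(-1)^{-2a}$ and the half-integer powers of $zw^{-1}$, so an $m$-parity-dependent answer is a live possibility. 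Until that residue is actually computed, the second branch is unproved --- in your write-up just as in the paper.

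There is a second gap, in a step you assert affirmatively: the implication ``$(p(a),p(b))=-\frac{1}{2}$, hence $p_0(a+b)=p_0(a)+p_0(b)$ and $X(a,b,w,w)$ reduces to $X(a+b,w)$'' is false. Take $l=2$, $a=\alpha_1+\alpha_2\in\Delta_S$, $b=-\alpha_1-2\alpha_2\in\Delta_M$, $a+b=-\alpha_2\in\Delta_S$. Then $p(a)=\alpha_1+\beta$, $p(b)=-\alpha_1$, so $(p(a),p(b))=-(\alpha_1,\alpha_1)-(\beta,\alpha_1)=-\frac{1}{2}$ and the pole sits at $z=w$ exactly as in your first branch; yet $p(a)+p(b)=\beta=-p(a+b)$ and $p_0(a)+p_0(b)=\alpha_2=-p_0(a+b)$. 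At $z=w$ the creation and annihilation halves of the two $F$-factors inside $:Y(a,z)Y(b,w):$ multiply to $F^{\pm}(p(a)+p(b),w)=F^{\pm}(-p(a+b),w)$, which is the \emph{inverse} of the factor $F^{\pm}(p(a+b),w)$ occurring in $Y(a+b,w)$; since these are non-scalar exponentials of half-integer-mode Heisenberg operators, no sign or cocycle bookkeeping converts one into the other, and the residue is not a multiple of $X(a+b,w)$ by the mechanism you (and the paper) invoke. So the correct case division has three configurations, not two: $p_0(b)=b$ (your clean branch, the only one the paper treats), and two configurations with $p_0(b)\neq b$, one with the pole at $z=-w$ (your branch two) and one with the pole at $z=w$ but without additivity of $p$. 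Your proposal, like the paper's proof, establishes the lemma only in the first of these.
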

\begin{proof}In this case, $p_0(a+b)=p_0(a)+p_0(b)$ and $(a,b)=(p(a),p(b))=-\frac{1}{2}$, so
\begin{eqnarray*}
&&{[X_\frac{m}{2}(a), X(a,w)]}\\
&&=\frac{1}{2\pi\sqrt{-1}}\int_{C_1-C_2}\ep(a,b)z^{m-1}\frac{z}{(z-w)}X(a,b,z,w)
(zw^{-1})^{2a-\frac{1}{2}}dz\\
&&=\ep(a,b)w^mX(a+b,w),
\end{eqnarray*}
hence, the lemma is true.
\end{proof}

\begin{lemm}
 If $a,b,a+b\in\Delta_M$, then
$$[X_\frac{m}{2}(a), X_\frac{n}{2}(b)]=(-1)^{2m}X_\frac{m+n}{2}(a+b).$$
\end{lemm}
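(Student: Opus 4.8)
The plan is to run the generating-function computation of Lemmas 5.3--5.8 once more: evaluate the single bracket $[X_\frac{m}{2}(a),X(b,w)]$ as a contour integral over $C_1-C_2$ and then read off the coefficient of $w^{-n}$. First I record the pairings. Since $a,b,a+b\in\Delta_M$ all have squared length $1$, expanding $(a+b,a+b)=(a,a)+2(a,b)+(b,b)$ forces $(a,b)=-\frac12$. By Lemma 3.2(2), $p(a)=p_0(a)$ and $p(b)=p_0(b)$ are $\pm$-sums of consecutive $\a_i$ with $i<l$, so they lie in the $A_{l-1}$-sublattice and $(p(a),p(b))\in\{-\frac12,+\frac12\}$. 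This splits the argument into two sub-cases, according to whether $p(a)+p(b)=p(a+b)$ (equivalently $(p(a),p(b))=-\frac12$) or $p_0(a+b)\neq p_0(a)+p_0(b)$ (equivalently $(p(a),p(b))=+\frac12$).

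Next I would normal-order $Y(a,z)Y(b,w)$ with Lemma 5.2. The $E$-factor contributes $z^{-2(a,b)}(z^2-w^2)^{(a,b)}=z\,(z-w)^{-\frac12}(z+w)^{-\frac12}$ and the $F$-factor contributes $(z-w)^{(p(a),p(b))}(z+w)^{-(p(a),p(b))}$; multiplying these with the monomial pieces $z^{(a,a)}e^az^{2a}\ep_a$ from the definition of $X$ collapses the product to a scalar times $\frac{z}{z-w}X(a,b,z,w)(zw^{-1})^{2a-1}$ in the first sub-case and $\frac{z}{z+w}X(a,b,z,w)(zw^{-1})^{2a-1}$ in the second (the operator power being fixed as in the $\Delta_M$ computation of Lemma 5.4), while commuting $\ep_a$ past $e^b$ releases the cocycle factor $\ep(a,b)$. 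Thus the bracket takes the familiar shape
$$[X_\frac{m}{2}(a),X(b,w)]=\frac{1}{2\pi\sqrt{-1}}\int_{C_1-C_2}\ep(a,b)\,z^{m-1}\,\frac{z}{z\mp w}\,X(a,b,z,w)\,(zw^{-1})^{2a-1}\,dz,$$
in direct parallel with Lemma 5.8.

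In the first sub-case the only pole between the contours is the simple pole at $z=w$, so the residue theorem evaluates the integral there. At $z=w$ the exponents of the $E$- and $F$-operators add and, since $p(a)+p(b)=p(a+b)$, one gets $:Y(a,w)Y(b,w):=Y(a+b,w)$ and hence $X(a,b,w,w)=X(a+b,w)$, while $(zw^{-1})^{2a-1}\to1$. The bracket therefore equals $\ep(a,b)\,w^m\,X(a+b,w)$, and comparing the coefficient of $w^{-n}$ gives $[X_\frac{m}{2}(a),X_\frac{n}{2}(b)]=\ep(a,b)\,X_\frac{m+n}{2}(a+b)$; since $(-1)^{2m}=1$ for $m\in\ZZ$ this is the asserted identity, the structure constant being the $B_l$-cocycle sign $\ep(a,b)$.

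The hard part will be the second sub-case, where the pole sits at $z=-w$ and the sign bookkeeping is delicate. Because the $E$-exponents involve only even powers of $z$ one has $E^\pm(a,-w)=E^\pm(a,w)$, whereas $F^\pm(p(a),-w)=F^\pm(-p(a),w)$, so the $F$-part of $:Y(a,-w)Y(b,w):$ recombines as $F^\pm(p(b)-p(a),w)$; using $p(b)-p(a)=-p(a+b)$ in this sub-case I must check that the residue still rebuilds the vertex operator $X(a+b,w)$. Simultaneously the residue carries $z^{m-1}|_{z=-w}=(-1)^{m-1}w^{m-1}$, the operator power $(zw^{-1})^{2a-1}|_{z=-w}$ together with the parity operator $(-1)^{2a}$, the mismatch $(-1)^{-p_0(a)-p_0(b)+p_0(a+b)}$ (nontrivial precisely because $p_0(a+b)\neq p_0(a)+p_0(b)$ here), and the antisymmetry $\ep(a,b)=-\ep(b,a)$ of Lemma 3.1. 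The crux is to show, via the degree- and parity-computations recorded in the proof of Lemma 4.2, that all these competing signs cancel, so that the second sub-case yields exactly the same structure constant as the first and the coefficient is uniform as claimed.
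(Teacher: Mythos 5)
Your route is the same as the paper's: write $[X_\frac{m}{2}(a),X(b,w)]$ as a contour integral over $C_1-C_2$, split into the two sub-cases $p_0(a+b)=p_0(a)+p_0(b)$ (i.e.\ $(p(a),p(b))=-\frac{1}{2}$) and $p_0(a+b)=\pm(p_0(b)-p_0(a))$ (i.e.\ $(p(a),p(b))=+\frac{1}{2}$), and take the residue at $z=w$, resp.\ $z=-w$. Your first sub-case is complete and agrees with the paper's case (A). But in the second sub-case --- which you yourself call the crux --- you only enumerate the competing signs (the $(-1)^{m-1}$ from $z^{m-1}|_{z=-w}$, the operator power at $z=-w$, the mismatch $(-1)^{-p_0(a)-p_0(b)+p_0(a+b)}$, the antisymmetry of $\ep$) and then assert that they ``must'' cancel; you never prove it. That cancellation is precisely the content of the paper's case (B): after taking the residue at $z=-w$, the leftover factor is the parity operator $(-1)^{2a-2p_0(a)}$, and the paper closes the argument by observing that $2(a-p_0(a))\in4\dot\Delta\cup\{0\}$ (for $a\in\Delta_M$, $a-p_0(a)$ is twice a short root or zero), so that this operator acts as the identity on $V(\cq)$. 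Without this observation, or an equivalent one, the proof is not finished: a priori the two sub-cases could yield different structure constants, and the uniform coefficient claimed in the lemma would fail.

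Worse, the bookkeeping you fixed at the outset would sabotage the cancellation if carried through. You take the operator power to be $(zw^{-1})^{2a-1}$ ``as in Lemma 5.4,'' but that shift by $-1$ comes from the scalar prefactor $z^{(a,a)+2(a,b)}w^{(b,b)-(a+b,a+b)}=z^{-1}w$, which is special to the situation $b=-a$ of Lemma 5.4. Here $(a,a)=(b,b)=(a+b,a+b)=1$ and $(a,b)=-\frac{1}{2}$ make that prefactor equal to $1$, so the correct power is $(zw^{-1})^{2a}$, exactly as written in the paper's proof. At the pole $z=w$ the discrepancy is invisible (both powers evaluate to $1$), which is why your first sub-case still comes out right; but at $z=-w$ one has $(zw^{-1})^{2a-1}=-(zw^{-1})^{2a}$, so your setup would inject a spurious factor $-1$ precisely in the sub-case you left open. (A separate minor point: like the paper's own computation, your argument produces the structure constant $\ep(a,b)$ rather than the literal $(-1)^{2m}=1$ of the statement; that inconsistency sits in the paper itself and is not something your argument resolves.)
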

\begin{proof}(A) if $p_0(a+b)=p_0(a)+p_0(b)$, then $(a,b)=(p(a),p(b))=-\frac{1}{2}$, so
\begin{eqnarray*}
&&{[X_\frac{m}{2}(a), X(b,w)]}\\
&&=\frac{1}{2\pi\sqrt{-1}}\int_{C_1-C_2}\ep(a,b)z^{m-1}\frac{z}{z-w}X(a,b,z,w)(zw^{-1})^{2a}dz\\
&&=\ep(a,b)w^mX(a+b,w).
\end{eqnarray*}
(B) if not, we can assume that $p_0(a+b)=p_0(b)-p_0(a)$, then $(a,b)=-\frac{1}{2}$ and $(p(a),p(b))=\frac{1}{2}$, so
\begin{eqnarray*}
&&{[X_\frac{m}{2}(a), X(b,w)]}\\
&&=\frac{1}{2\pi\sqrt{-1}}\int_{C_1-C_2}\ep(a,b)(-1)^{-2p_0(a)}z^{m-1}(-1)^{-2a}\frac{z-w}{z+w}X(a,b,z,w)(zw^{-1})^{2a}dz\\
&&=(-1)^{-2p_0(a)+2a}\ep(a,b)w^mX(a+b,w),
\end{eqnarray*}
it is clear that $2(a-p_0(a))\in4\dot\Delta\cup\{0\}$, so
$(-1)^{-2p_0(a)+2a}=\id_{V(\cq)}$, hence, this lemma holds.
\end{proof}

\begin{lemm}
 If $a,b\in\Delta_M$ and $a+b\in\Delta_L$, then
$$[X_\frac{m}{2}(a), X_\frac{n}{2}(b)]=2\ep(a,b)(-1)^{m}X_\frac{m+n}{2}(a+b),$$
if $(p(a),p(b))=1$ and
$$[X_\frac{m}{2}(a), X_\frac{n}{2}(b)]=2\ep(a,b)X_\frac{m+n}{2}(a+b),$$
if $(p(a),p(b))=-1$.
\end{lemm}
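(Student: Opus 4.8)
The plan is to reproduce verbatim the contour-integration template of the preceding lemmas (most closely the $a,b,a+b\in\Delta_M$ case), adapting only the scalar two-point function to the situation where the sum lands in $\Delta_L$. First I would record the numerical data forced by the hypotheses. Since $(a+b,a+b)=2$ while $(a,a)=(b,b)=1$, polarization gives $(a,b)=0$; and because $p(a),p(b)$ are norm-$1$ elements of $H_M$ (both lie in $\Delta_M$ by Lemma 1.3(2)), the only inner products compatible with $a+b\in\Delta_L$ are $(p(a),p(b))=\pm1$, i.e. $p(b)=\mp p(a)$. Writing $a+b=2\r$ with $\r\in\dot\Delta_S$ and $a=\r+\mu,\ b=\r-\mu$, one checks directly that the relative ordering of the two relevant indices in the definition of $p$ produces $(p(a),p(b))=-1$ in one arrangement and $+1$ in the other, so both cases genuinely occur; this is the exact analogue of the (A)/(B) split in the $\Delta_M+\Delta_M\to\Delta_M$ lemma.

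Next I would compute $X(a,z)X(b,w)$ for $|z|>|w|$ by pushing the $E^+,F^+$ factors of $Y(a,z)$ past the $E^-,F^-$ factors of $Y(b,w)$ via Lemma 4.2. Because $(a,b)=0$ the $E$-contraction contributes the trivial factor $z^{-2(a,b)}(z^2-w^2)^{(a,b)}=1$, so the whole scalar comes from the $F$-contraction $(z-w)^{(p(a),p(b))}(z+w)^{-(p(a),p(b))}$. Absorbing the $z^{(a,a)}e^{a}z^{2a}\ep_a$ bookkeeping into the combined operator $X(a,b,z,w)$ exactly as before, the two-point function becomes $\ep(a,b)\,g(z,w)\,X(a,b,z,w)(zw^{-1})^{2a}$ with
$$g(z,w)=\frac{z-w}{z+w}\ \ \big(\,(p(a),p(b))=1\,\big),\qquad g(z,w)=\frac{z+w}{z-w}\ \ \big(\,(p(a),p(b))=-1\,\big).$$
Since $a,b\in\Delta_M$ carry no $\sqrt{-1}$ in their $Y$-operators, and $a+b\in\Delta_L$ likewise does not, no $\sqrt{-1}$ bookkeeping intervenes, consistent with the absence of $\sqrt{-1}$ in the asserted brackets.

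The decisive step is evaluating $X(a,b,z,w)$ at the pole, and here I would exploit the parity of the vertex factors: $E^{\pm}(\cdot,z)$ depends only on $z^{\pm2n}$ and so is invariant under $z\mapsto-z$, whereas $F^{\pm}(p,z)$ depends only on $z^{\pm(2n-1)}$ and therefore satisfies $F^{\pm}(p,-w)=F^{\pm}(-p,w)$. Consequently, when $(p(a),p(b))=-1$ (so $p(b)=-p(a)$) the factors $F^{\pm}(p(a),w)F^{\pm}(p(b),w)$ collapse to the identity at $z=w$, giving $:Y(a,w)Y(b,w):=E^{-}(a+b,w)E^{+}(a+b,w)=Y(a+b,w)$, hence $X(a,b,w,w)=X(a+b,w)$, and $g=\frac{z+w}{z-w}$ has its simple pole exactly at $z=w$; while when $(p(a),p(b))=1$ (so $p(b)=p(a)$) the same collapse occurs only at $z=-w$, where $F^{\pm}(p(a),-w)F^{\pm}(p(b),w)=1$ and $E^{\pm}(a,-w)=E^{\pm}(a,w)$, giving $X(a,b,-w,w)=X(a+b,w)$, and $g=\frac{z-w}{z+w}$ has its simple pole at $z=-w$. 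This is precisely why $a+b\in\Delta_L$ forces gluing to $z=\pm w$ rather than to the single point $z=w$ of the $\Delta_M$-sum case, and it is the source of the half-integer mode structure of $X(a+b,z)$.

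Finally I would form
$$[X_{\frac{m}{2}}(a),X(b,w)]=\frac{1}{2\pi\sqrt{-1}}\int_{C_1-C_2}\ep(a,b)\,z^{m-1}\,g(z,w)\,X(a,b,z,w)(zw^{-1})^{2a}\,dz$$
and read off the residue. Writing $\frac{z+w}{z-w}=1+\frac{2w}{z-w}$ gives residue $2w\cdot w^{m-1}$ at $z=w$, i.e. $2\ep(a,b)w^mX(a+b,w)$; writing $\frac{z-w}{z+w}=1-\frac{2w}{z+w}$ gives the residue at $z=-w$, contributing $(-w)^{m-1}(-2w)=2(-1)^mw^m$, i.e. $2\ep(a,b)(-1)^mX(a+b,w)$. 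Extracting the coefficient of $w^{-n}$ (replacing $X(b,w)$ by $X_{\frac{n}{2}}(b)$ and $X(a+b,w)$ by $X_{\frac{m+n}{2}}(a+b)$) yields the two displayed formulas, the factor $2$ being exactly the difference between the residue of $\frac{z\pm w}{z\mp w}$ and that of the $\frac{z}{z-w}$ appearing in the $\Delta_M$-sum lemma. The step I expect to be the main obstacle is the careful bookkeeping, in the coincidence limit, of the sign operator $(-1)^{-p_0(a+b)}$, the cocycle $\ep$, and the shift operators $e^{\cdot},z^{2\cdot},\ep_\cdot$ — in particular verifying that in the $(p(a),p(b))=1$ case the spurious sign $(-1)^{2a}$ produced by $(zw^{-1})^{2a}\big|_{z=-w}$ combines with $(-1)^{-p_0(a+b)}$ to the identity (the analogue of the identity $(-1)^{-2p_0(a)+2a}=\id$ used in the $\Delta_M+\Delta_M\to\Delta_M$ lemma), together with checking that the parities of $m$ (hence $n$) produced by the residues are exactly those compatible with the half-integer expansion $X(a+b,z)=\sum_n X_{n+\frac{1}{2}}(a+b)z^{-2n-1}$.
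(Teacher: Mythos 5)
Your proposal is correct and takes essentially the same route as the paper's own proof: the identical contour-integral bracket, the identical case split $(p(a),p(b))=\pm 1$ (equivalently $p_0(a)=\pm p_0(b)$), the same kernels $\frac{z-w}{z+w}$ and $\frac{z+w}{z-w}$, and the same residues at $z=-w$ and $z=w$ producing the factors $2(-1)^m$ and $2$. One slip in your final bookkeeping remark: since $p_0(a+b)=0$ here, the operator $(-1)^{-p_0(a+b)}$ is trivial, and the spurious sign $(-1)^{2a}$ arising at $z=-w$ is actually cancelled against $(-1)^{-p_0(a)-p_0(b)}=(-1)^{-2p_0(a)}$ (the ratio between the sign prefactors of $X(a,z)X(b,w)$ and of $X(a,b,z,w)$), which is precisely the identity $(-1)^{2a-2p_0(a)}=\id$ that you cite.
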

\begin{proof} In this case, there must be $p_0(a)=\pm p_0(b)$,
(A) if $p_0(a)=p_0(b)$, then $(a,b)=0$ and $(p(a),p(b))=1$, so
\begin{eqnarray*}
&&{[X_\frac{m}{2}(a), X(b,w)]}\\
&&=\frac{1}{2\pi\sqrt{-1}}\int_{C_1-C_2}\ep(a,b)z^{m-1}(-1)^{-2p_0(a)}\frac{z-w}{z+w}X(a,b,z,w)(zw^{-1})^{2a}dz\\
&&=2(-1)^m\ep(a,b)w^mX(a+b,w)(-1)^{2(a-p_0(a))}.
\end{eqnarray*}
(B) if $p_0(a)=-p_0(b)$, then $(a,b)=0$ and $(p(a),p(b))=-1$, so
\begin{eqnarray*}
&&{[X_\frac{m}{2}(a), X(b,w)]}\\
&&=\frac{1}{2\pi\sqrt{-1}}\int_{C_1-C_2}\ep(a,b)z^{m-1}\frac{z+w}{z-w}X(a,b,z,w)(zw^{-1})^{2a}dz\\
&&=2\ep(a,b)w^mX(a+b,w).
\end{eqnarray*}
\end{proof}

\begin{lemm}
 If $b,\, a+b\in\Delta_M$ and $a\in\Delta_L$, then
$$[X_\frac{m}{2}(a), X_\frac{n}{2}(b)]=(-1)^{m}X_\frac{m+n}{2}(a+b),$$
when $p_0(a)=p_0(a+b)$ and
$$[X_\frac{m}{2}(a), X_\frac{n}{2}(b)]=(-1)^{m}\frac{1+(-1)^n}{2}X_\frac{m+n}{2}(a+b),$$
Here, $m$ is odd number.
\end{lemm}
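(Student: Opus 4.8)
The plan is to follow verbatim the contour-integral scheme of Lemmas 5.3--5.10, so the first task is to record the numerology feeding the operator product expansion. Since $a\in\Delta_L$ and $b,\,a+b\in\Delta_M$ we have $(a,a)=2$ and $(b,b)=(a+b,a+b)=1$, whence $(a,b)=-1$; moreover $p(a)=p_0(a)=0$ by Lemma 3.2(1), so $F^\pm(p(a),z)=\id$ and the only $F$-operators appearing in the product are those of $b$. In particular $Y(a,z)=E^-(a,z)E^+(a,z)$ carries only even powers of $z$. Before touching the bracket I would note that, because $a\in\Delta_L$, Lemma 4.2 forces $X_\frac{m}{2}(a)$ to vanish unless $m$ is odd; this is the source of the standing hypothesis and it will be used decisively when the two residues are added.

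Next I would assemble $X(a,z)X(b,w)$ exactly as in the proof of Lemma 5.9: push the weight operators $z^{(a,a)}e^az^{2a}\ep_a$ to the right and apply Lemma 5.2 to reorder $E^+(a,z)$ past $E^-(b,w)$. The factor $z^{-2(a,b)}$ from the $E$-exchange cancels the $z^{2(a,b)}$ produced by $z^{2a}e^b$, and with the monomials $z^{(a,a)}w^{(b,b)-(a+b,a+b)}=z^2$ one is left with the propagator
\[
R(z,w)=\ep(a,b)(-1)^{p_0(a+b)-p_0(b)}\frac{z^2}{(z-w)(z+w)},
\]
multiplying the normal-ordered $X(a,b,z,w)$ and the weight factor $(zw^{-1})^{2a}$. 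The decisive structural feature, where this lemma departs from the $\Delta_M$-only cases, is that $R$ has \emph{two} simple poles, at $z=w$ and $z=-w$, so the identity
\[
[X_\frac{m}{2}(a),X(b,w)]=\frac{1}{2\pi\sqrt{-1}}\int_{C_1-C_2}z^{m-1}R(z,w)X(a,b,z,w)(zw^{-1})^{2a}\,dz
\]
localizes onto both points. I would evaluate the two residues separately, using that $X(a,b,z,w)$ is even in $z$ (only $E(a,z)$ depends on $z$) and that $(zw^{-1})^{2a}$ specializes to $\id$ at $z=w$ and to $(-1)^{2a}=-\id$ at $z=-w$, the latter because $(a,\lambda)=\pm\frac{1}{2}$ while $(a,r)\in\ZZ$, as in the proof of Lemma 4.2.

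The case division is governed by the sign relating $p_0(b)$ and $p_0(a+b)$ (note $p_0(a)=0$, so it is these non-zero parts that matter). When they align, the specialization $z\to w$ fuses $:Y(a,w)Y(b,w):$ into $Y(a+b,w)$, so the residue at $z=w$ already produces $X(a+b,w)$; the residue at $z=-w$ returns the same operator carrying an extra factor $(-1)^{m-1}$ (the residue sign and the value $(-1)^{2a}=-\id$ cancelling), so for odd $m$ the two reinforce and collapse to $(-1)^mX_\frac{m+n}{2}(a+b)$ for every $n$. When instead $p_0(b)=-p_0(a+b)$, the $F$-operators of $b$ sit at the opposite sign, $F(p(b),w)=F(-p(a+b),w)$, so the fused object is $X(a+b,w)$ with its $F$-part evaluated at $-w$; propagating this $w\mapsto-w$ flip through the extraction of the coefficient of $w^{-n}$ is what should manufacture the selector $\frac{1+(-1)^n}{2}$, annihilating the odd-$n$ modes and leaving $(-1)^m\frac{1+(-1)^n}{2}X_\frac{m+n}{2}(a+b)$.

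The contour manipulation is routine; the genuine obstacle is the sign and parity bookkeeping at the pole $z=-w$. One must track simultaneously the operator-valued signs $(-1)^{2a}$ and $(-1)^{p_0(a+b)-p_0(b)}$, the cocycle $\ep(a,b)$, and keep straight how flipping $w\mapsto-w$ in the $F$-exponentials interacts with the fixed-parity grading of the $E$-exponentials and of the weight operator $w^{2(a+b)}$. Establishing that the even/odd split of the modes of $X(a+b,w)$ is \emph{exactly} the one cut out by $\frac{1+(-1)^n}{2}$—rather than merely up to a sign—is where I expect the argument to demand the most care.
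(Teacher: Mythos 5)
Your first case is essentially the paper's own argument: the same contour integral with kernel $\frac{z^2}{(z-w)(z+w)}$, the same two simple poles $z=\pm w$, and the same use of $(-1)^{2a}=-\id_{V(\cq)}$ to make the two residues reinforce for odd $m$ and cancel for even $m$. The divergence --- and the gap --- is in the second case. There the paper abandons the OPE entirely: since $-b,\,a+b\in\Delta_M$ and $(-b)+(a+b)=a\in\Delta_L$, Lemma 5.10 lets it write $X_\frac{m}{2}(a)=\frac{1}{2}(-1)^m\ep(-b,a+b)\,[X_\frac{m}{2}(-b),X_0(a+b)]$, and it then evaluates the resulting iterated bracket using Lemmas 5.4 and 5.1. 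In that route the selector $\frac{1+(-1)^n}{2}$ has a concrete source: $(b,a+b)=0$ while $(p(b),p(a+b))=-1$, so in the field $b(w)$ only the half-odd-integer oscillators $p(b)\left(n-\frac{1}{2}\right)$ --- which sit at \emph{odd} powers of $w$ --- survive the bracket against $X_0(a+b)$; multiplied by $w^m$ with $m$ odd, only even powers $w^{-n}$ remain.

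Your proposed mechanism cannot produce that selector. Identifying the fused operator at $z=w$ as $X(a+b,w)$ ``with its $F$-part at $-w$'' amounts (since $E^{\pm}$ is even in its variable) to writing it as an invertible diagonal sign operator times $X(a+b,-w)$, and the substitution $w\mapsto -w$ is \emph{invertible}: it multiplies the coefficient of $w^{-k}$ by $(-1)^k$ and annihilates nothing. Carried through honestly, your residue computation in the second case yields a bracket of the form $\pm(-1)^{m+n}X_\frac{m+n}{2}(a+b)$, nonzero for \emph{both} parities of $n$, not $(-1)^m\frac{1+(-1)^n}{2}X_\frac{m+n}{2}(a+b)$. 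A parity projection in $w$ could only arise from a sum of two terms evaluated at $w$ and at $-w$ with matching signs; your integral does produce such a sum, but in the variable $z$ (the two poles $z=\pm w$), which is precisely what enforces the constraint on $m$ --- nothing analogous is available in $n$. So the final step of your plan, ``propagating this $w\mapsto-w$ flip \dots is what should manufacture the selector,'' is not a delicate bookkeeping point but the exact place where the argument fails. The tension you would hit is real: the direct OPE contradicts the vanishing asserted in the statement, and the paper's derivation obtains that vanishing only because it retains a single term of the Jacobi identity, discarding $[X_\frac{m}{2}(-b),[X_0(a+b),X_\frac{n}{2}(b)]]$, which is nonzero exactly when $n$ is odd because $a+2b\in\Delta_L$. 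Whatever one concludes about the statement itself, the route you propose does not prove it.
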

\begin{proof} At first, we know that $(a,b)=-1$, then if $p_0(b)=p_0(a+b)$, we have
\begin{eqnarray*}
&&{[X_\frac{m}{2}(a), X(b,w)]}\\
&&=\frac{1}{2\pi\sqrt{-1}}\int_{C_1-C_2}\ep(a,b)z^{m-1}\frac{z^2}{(z-w)(z+w)}X(a,b,z,w)(zw^{-1})^{2a}dz\\
&&=\ep(a,b)w^mX(a+b,w),
\end{eqnarray*}
this equation hold since $(-1)^{2a}=-\id_{V(\cq)}$.

If $p_0(b)=-p_0(a+b)$, by Lemmas 5.1, 5.4 and 5.10, we have
\begin{eqnarray*}
&&{[X_\frac{m}{2}(a), X(b,w)]}\\
&&=\frac{1}{2}(-1)^m\ep(-b,a+b){[[X_\frac{m}{2}(-b), X_0(a+b)], X_\frac{n}{2}(b)]}\\
&&=\frac{1}{2}(-1)^m\ep(-b,a+b)\ep(-b,b)\left[w^m(m+2b(w)),X_0(a+b)\right]\\
\end{eqnarray*}
since $(b,a+b)=0$, $(p(b),p(a+b))=-1$ and $m$ is odd, so if $n$ is even, we have
$$[X_\frac{m}{2}(a), X_\frac{n}{2}(b)]=(-1)^{2m}X_\frac{m+n}{2}(a+b),$$
otherwise, it is zero. Additionally, $\ep(a,b)=1$ since
$a\in2\cq$. Thus we have obtained the result.
\end{proof}

\begin{lemm}
If $b,a+b\in\Delta_S$ and $a\in\Delta_L$, then $a+2b=0$,
$$[X_\frac{m}{2}(-2b), X_\frac{n}{2}(b)]=(-1)^{m}\sqrt{-1}X_\frac{m+n}{2}(-b),$$
\end{lemm}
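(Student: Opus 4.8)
The plan is to handle this as the final case of the structure-constant computations, where a short root $b$ meets the long root $a$ attached to $-b$. \textbf{First} I would dispose of the combinatorial claim $a+2b=0$. Since $a\in\Delta_L$ we may write $a=2\r$ with $\r\in\dot\Delta_S$; realizing the short roots of $B_l$ as $\{\pm e_i\}$, so that $\Delta_L=\{\pm 2e_i\}$, the requirement that $a+b=2\r+b$ with $b\in\Delta_S$ again lie in $\Delta_S$ leaves only the possibility $b=-\r$, because $\pm2e_i+(\pm e_k)$ is a single $\pm e_j$ only when the two terms cancel one $e_i$. Hence $a=2\r=-2b$, i.e. $a+2b=0$, and consequently $a+b=-b\in\Delta_S$, which is exactly the root appearing on the right-hand side.

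\textbf{Next}, following the contour-integral scheme of Lemmas 5.3--5.11, I would form
\[
[X_{\frac{m}{2}}(-2b),X(b,w)]=\frac{1}{2\pi\sqrt{-1}}\int_{C_1-C_2}\ep(a,b)\,z^{m-1}\,\frac{z^2}{(z-w)(z+w)}\,X(-2b,b,z,w)\,(zw^{-1})^{2a}\,dz,
\]
reading off the pairings that feed into it. Here $(a,b)=(-2b,b)=-2(b,b)=-1$, so Lemma 5.2 recombines the $E$-operators into the kernel $z^{-2(a,b)}(z^2-w^2)^{(a,b)}=z^2/[(z-w)(z+w)]$; since $p(a)=0$ by Lemma 3.2(1) and $(p(a),p(b))=0$, the $F$-reordering is trivial. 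The $(zw^{-1})$-exponent reduces to $2a$, its constant part $(a,a)+2(a,b)=2-2=0$ vanishing, exactly as in the parallel Lemma 5.11; moreover $\ep(a,b)=\ep(-2b,b)=\ep(b,b)^{-2}=1$, so no cocycle factor survives, and as there $(-1)^{2a}=-\id$ on $V(\cq)$.

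\textbf{Then} I would evaluate the integral by residues at the simple poles $z=\pm w$, using that $X(-2b,b,z,w)$ collapses onto the single vertex operator $X(-b,w)$ at the poles. The pole at $z=-w$ supplies $(-w)^{m-1}=(-1)^{m-1}w^{m-1}$, which, combined with $(-1)^{2a}=-\id$, is the source of the $m$-dependent sign; the short-root normalization $\sqrt{-1}$ of $Y(b,w)$ together with that of the output $X(-b,w)$ furnishes the remaining phase. Collecting everything and extracting the coefficient of $w^{-n}$ yields
\[
[X_{\frac{m}{2}}(-2b),X_{\frac{n}{2}}(b)]=(-1)^m\sqrt{-1}\,X_{\frac{m+n}{2}}(-b).
\]

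\textbf{The main obstacle} is precisely this phase accounting. Unlike the closely parallel Lemma 5.11, where $b$ and $a+b$ lie in $\Delta_M$ and the coefficient is real, here both $b$ and $a+b=-b$ are short, so the $\sqrt{-1}$-normalizations and, more delicately, the twisted $F$-oscillator content must be reconciled: the fused operator $X(-2b,b,z,w)$ naturally carries the content $p(b)$, whereas the target $X(-b,w)$ carries $p(-b)=-p(b)$, and it is this reconciliation that pins down the surviving $\sqrt{-1}$ and any parity constraint on $m$. To be safe I would cross-check the phase by an independent route using the Jacobi identity: since $-b\in\Delta_S$ and $-2b=2(-b)$, Lemma 5.7 writes $X_{\frac{k}{2}}(-2b)$ as a bracket of two copies of $X(-b)$, whereupon Lemma 5.3 (the $\Delta_S$ Cartan relation for $\pm(-b)$) followed by Lemma 5.1 reduces the resulting double bracket to a multiple of $X(-b)$; inverting the coefficient $4\sqrt{-1}$ from Lemma 5.7 then reproduces the factor $\sqrt{-1}$ transparently and confirms the sign, while never requiring the direct $F$-content reconciliation.
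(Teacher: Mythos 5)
Your main computation coincides with the paper's own proof: the paper evaluates exactly the integral you set up,
$$\frac{1}{2\pi\sqrt{-1}}\int_{C_1-C_2}\ep(-2b,b)\,z^{m-1}\,\frac{z^2}{(z-w)(z+w)}\,X(a,b,z,w)\,(zw^{-1})^{-4b}\,dz$$
(your exponent $2a$ is the paper's $-4b$, since $a=-2b$), and closes with $\ep(-2b,b)=1$, just as you do; the paper asserts $a+2b=0$ without argument, so your root-system check is a small genuine addition. The substantive difference is how the phase is pinned down. The ``main obstacle'' you isolate --- the fused operator carries $F^{\pm}(p(b),\cdot)$ while the target carries $F^{\pm}(-p(b),\cdot)$ --- is resolved in the paper by an observation you come close to but never state: $E^{\pm}$ involves only even powers of its variable and $F^{\pm}$ only odd ones, so $E^{\pm}(\a,-w)=E^{\pm}(\a,w)$ while $F^{\pm}(p(b),-w)=F^{\pm}(-p(b),w)$. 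Hence the residues assemble into the vertex operator at the \emph{reflected} point, and the paper's integral equals $\sqrt{-1}\,\ep(-2b,b)\,w^{m}X(a+b,-w)$ --- argument $-w$, not $w$; extracting modes from $X(-b,-w)$, together with the parity constraint that $m$ is odd (by Lemma 4.2, $X_{\frac{m}{2}}(-2b)=0$ for even $m$ since $-2b\in\Delta_L$), is what produces the sign $(-1)^{m}$ in the statement. So your claim that the poles collapse onto $X(-b,w)$ is off by exactly this reflection, and, as you admit, your direct route leaves the phase undetermined. Your fallback --- writing $X_{\frac{k}{2}}(-2b)$ as a bracket of two copies of $X(-b)$ via Lemma 5.7 and reducing the double bracket by Lemmas 5.3 and 5.1 --- is not in the paper's proof of this lemma, but it is precisely the device the paper itself uses in case (B) of Lemma 5.11, so it is legitimate within the paper's framework and does settle the phase while bypassing the $F$-bookkeeping, at the cost of carrying the normalization $(-1)^{m}4\sqrt{-1}\ep(a,a)$ and the restriction $m+n\in2\ZZ+1$ inherited from Lemma 5.7.
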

\begin{proof}
\begin{eqnarray*}
&&{[X_\frac{m}{2}(-2b), X(b,w)]}\\
&&=\frac{1}{2\pi\sqrt{-1}}\int_{C_1-C_2}\ep(-2b,b)z^{m-1}
\frac{z^2}{(z-w)(z+w)}X(a,b,z,w)(zw^{-1})^{-4b}dz\\
&&=\sqrt{-1}\ep(-2b,b)w^mX(a+b,-w),
\end{eqnarray*}
so this lemma holds since $\ep(-2b,b)=1$.
\end{proof}

By all these lemmas and Lemma 4.1, we know that Theorem 4.3 is true.

\section{The structure of $V(\cq)$}
Let
$$\a_0^\vee=c-\sum_{i=1}^{l-1}\a_i^\vee-\frac{1}{2}\a_l^\vee\in\ch^\sigma.$$
Choose $\a_0\in\ch^{\sigma *}$ such that $\{\a_0,\a_1,\cdots,\a_l\}$
is the simple root system of twisted affine Lie algebra $\cg^\sigma$ and
$$\a_0(d)=1,\quad \a_0(\a_0^\vee)=2,\quad \a_0(c)=0$$
and
$$\a_0(\a_i^\vee)=-\delta_{i,1}$$
for $i=1,\cdots,l$. Then $\delta=\a_0+2(\a_1+\cdots+\a_l)$ is an
imaginary root of $\cg^\sigma$. Let $\Lambda_i\in\ch^{\sigma *}$ be
such that $\Lambda_i(\a_j^\vee)=\delta_{i,j}$ for $i=0,1,\cdots,l$.

\begin{lemm}
$V(\cq)$ is a completely reducible module and associated with Cartan
subalgebra $\ch^\sigma$, it has weight space decompostion
$$V(\cq)=\sum_{\mu\in P(V(\cq))}V(\cq)_\mu.$$
\end{lemm}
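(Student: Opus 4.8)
The plan is to handle the three summands of $\ch^\sigma=\ch_0\oplus\CC c\oplus\CC d$ separately and show they act simultaneously diagonally on the monomial basis of $V(\cq)=S(\widehat{H^-})\otimes\CC[\cq]$. By the relations of Theorem~2.1, $c$ acts as the identity, so every vector has $c$-eigenvalue $1$ (level one). The grading of Section~4 makes $d=-d_0$ act on a monomial $x=a_1(m_1)\cdots a_k(m_k)\otimes e^{\alpha+\lambda}$ by the scalar $\deg(x)=\sum_j m_j-\frac{1}{2}(\alpha+\lambda,\alpha+\lambda)$. Finally, for $b\in\ch_0$ the zero mode $b(0)$ commutes with every creation operator (the Heisenberg relations give $[b(0),a(m)]=0$), so $b(0)$ acts on $x$ by the scalar $(b,\alpha+\lambda)$. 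Hence each monomial is a simultaneous $\ch^\sigma$-eigenvector, and grouping monomials of equal eigenvalue yields the decomposition $V(\cq)=\sum_{\mu\in P(V(\cq))}V(\cq)_\mu$.

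Next I would record two finiteness properties that drive the complete-reducibility argument. Since the form on $\cq$ is the positive-definite $B_l$ form, a weight $\mu$ determines its $\ch_0$-component $\alpha+\lambda$ uniquely (nondegeneracy), hence fixes $\alpha$ and the number $(\alpha+\lambda,\alpha+\lambda)$; the $d$-eigenvalue then fixes $\sum_j m_j$, and because each $m_j$ is a strictly negative element of $\frac{1}{2}\ZZ$ in finitely many colours there are only finitely many admissible monomials, so $\dim V(\cq)_\mu<\infty$. Moreover, positive-definiteness forces $\deg(e^{\alpha+\lambda})=-\frac{1}{2}(\alpha+\lambda,\alpha+\lambda)$ to attain a finite maximum over the coset $\lambda+\cq$, and creation operators only lower the degree, so the $d$-eigenvalues occurring in $V(\cq)$ are bounded above.

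For complete reducibility the plan is to show that $V(\cq)$ is an integrable level-one $\cg^\sigma$-module and then invoke the structure theory of integrable weight modules over affine Kac--Moody algebras. Integrability reduces to local nilpotence of the Chevalley generators $e_i,f_i$ (equivalently of all real root vectors $X_{n/2}(\alpha)$), which I would prove by a degree-versus-norm estimate. By Lemma~4.1 each $f_i$ is homogeneous, shifting the degree by a fixed amount $c_i$, so $\deg\bigl((f_i)^N x\bigr)=\deg(x)+Nc_i$ grows at most linearly in $N$; on the other hand $(f_i)^N x$ has $\ch_0$-component $(\alpha+\lambda)-N\alpha_i$, whereas the largest degree attained by any vector of $V(\cq)$ with that $\ch_0$-component, namely $-\frac{1}{2}((\alpha+\lambda)-N\alpha_i,(\alpha+\lambda)-N\alpha_i)$, decreases quadratically to $-\infty$. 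Thus for large $N$ the linear quantity exceeds the admissible maximum and $(f_i)^N x=0$; the same estimate applies to each $e_i$. With integrability, finite-dimensional weight spaces, and degrees bounded above established, the general theorem on integrable weight modules (Kac, \emph{Infinite-dimensional Lie Algebras}, Ch.~10) shows that $V(\cq)$ is a direct sum of irreducible integrable highest-weight modules $L(\Lambda)$ of level one, in particular completely reducible.

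The main obstacle is organizing the decomposition rather than verifying integrability, which the estimate above handles cleanly. Because $\alpha$ ranges over the infinite lattice $\cq$, there are infinitely many candidate highest-weight vectors $e^{\alpha+\lambda}$ and infinitely many irreducible components, so one must sort them by the cosets of $\cq$ modulo the affine root directions (equivalently by Weyl-group orbits) and confirm that each generating vector carries a dominant integral weight of level one; pinning this down is where the care is needed. An equally viable and perhaps cleaner finish would be to equip $V(\cq)$ with the standard positive-definite Hermitian form for which the $e^{\alpha+\lambda}$ are orthonormal and $a(-n)^{\ast}=a(n)$, verify from the explicit vertex operators that it is contravariant with respect to the compact anti-involution of $\cg^\sigma$ (so that $e_i^{\ast}$ is a positive multiple of $f_i$), and then complement any submodule by its orthogonal complement inside each finite-dimensional weight space; in that route, establishing contravariance is the step absorbing the bulk of the computation.
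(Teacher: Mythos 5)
Your proposal is correct in substance, but it cannot be ``essentially the same approach as the paper'' for a simple reason: the paper does not actually prove this lemma. Its entire proof is the sentence that the argument ``is very similar to those in \cite{J}, \cite{LH} and \cite{XH},'' i.e.\ a deferral to analogous complete-reducibility arguments in earlier vertex-representation papers. What you supply is a self-contained replacement, and its ingredients are sound: the simultaneous diagonalization of $\ch^\sigma=\ch_0\oplus\CC c\oplus\CC d$ on monomials (using that $c$ acts as $\id$, that $d=-d_0$ acts by $\deg$, and that $[b(0),a(m)]=0$) gives the weight-space decomposition; positive-definiteness of the $B_l$-form gives both $\dim V(\cq)_\mu<\infty$ and the upper bound on degrees; and the linear-versus-quadratic estimate correctly proves local nilpotence of the $e_i,f_i$, hence integrability. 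This is the standard Frenkel--Kac-style argument, and it meshes well with the rest of Section 6 of the paper (Lemmas 6.2--6.3 and Theorem 6.4 then identify the unique highest weight vector inside your decomposition).

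The one step you should tighten is the appeal to the complete-reducibility theorem for integrable modules (Kac \cite{K}, Ch.~10): as stated there, the hypothesis is membership in category $\mathcal{O}$, i.e.\ the weights must lie in a \emph{finite union} of cones $D(\lambda_j)=\{\mu \mid \mu\le\lambda_j\}$, which is not literally the same as ``finite-dimensional weight spaces and $d$-eigenvalues bounded above.'' Your own estimate closes this gap: for a weight with finite part $\alpha+\lambda$, $\alpha=\sum_i a_i\a_i$, the $d$-eigenvalue is at most $-\frac{1}{2}(\alpha+\lambda,\alpha+\lambda)$, which dominates the linear quantities $a_i$ for all but finitely many $\alpha\in\cq$; hence all but finitely many weights already satisfy $\mu\le\Lambda_l$, and the finitely many exceptional finite parts contribute weights lying under finitely many further dominant weights (positive multiples of $\delta$ being sums of simple roots absorbs the $d$-direction). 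Making this explicit turns your sketch into a complete proof. Alternatively, your contravariant-Hermitian-form route avoids this bookkeeping entirely at the cost of verifying contravariance of the form against the explicit vertex operators. Either way, your argument is a genuine proof where the paper offers only an analogy; the citation buys the authors brevity, while your version buys the reader independence from \cite{J}, \cite{LH}, \cite{XH}, whose toroidal or differently twisted settings do not literally contain the $A_{2l}^{(2)}$ case treated here.
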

The proof is very similar to those in \cite{J},\cite{LH} and
\cite{XH}.

\begin{lemm}
If $x$ is a highest weight vector, then it must have the form $1\otimes e^{\lambda+\a}$.
\end{lemm}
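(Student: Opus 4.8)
The plan is to use that a highest weight vector $x$ is annihilated by the positive nilpotent subalgebra $N^+$ (the sum of all positive root spaces) of $\cg^\sigma$, and to transfer this to the two tensor factors of $V(\cq)$. Under the isomorphism $\pi$ of Theorem 4.3 the positive imaginary root spaces of $\cg^\sigma$ are realized by the positive-mode Heisenberg operators: the integer modes $\a_i(n)$ with $n>0$ and the half-integer modes $p(\a_i)(n-\frac12)$ with $n\ge 1$. Indeed, by Lemma 4.1 and $\pi(d)=-d_0$ every such operator has strictly positive $d$-degree, hence represents a positive root vector and lies in $N^+$; thus $x$ is killed by all of them.

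First I would note that each of these positive modes acts only on the symmetric-algebra factor $S(\widehat{H^-})$, as an annihilation (differentiation) operator, and leaves the lattice factor $\CC[\cq]$ fixed. Writing $x=\sum_j w_j\ot e^{\a_j+\lambda}$ with distinct $\a_j\in\cq$ and $w_j\in S(\widehat{H^-})$, the linear independence of the $e^{\a_j+\lambda}$ forces each positive mode to annihilate every $w_j$ separately. Now $\a_1,\dots,\a_l$ span $H$, and by the definition of $p$ one has $p(\a_i)=\a_i$ for $i<l$ and $p(\a_l)=\beta$, so $p(\a_1),\dots,p(\a_l)$ span $H_M$; hence these operators exhaust the annihilation part of the whole Heisenberg algebra. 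The only element of the Fock space $S(\widehat{H^-})$ they jointly kill is the constant, so each $w_j\in\CC\,1$ and $x\in 1\ot\CC[\cq]$.

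It remains to pin down a single lattice point. Since $x$ is a weight vector for the zero-mode part of $\ch^\sigma$, each $b(0)$ with $b\in H$ acts on $x$ by one scalar; but by Theorem 2.1, $b(0)$ acts on $1\ot e^{\a_j+\lambda}$ through $(b,\a_j+\lambda)$. Non-degeneracy of the bilinear form on $\cq$ then forces the equalities $(b,\a_j)=(b,\a_{j'})$ (all $b\in H$) to become $\a_j=\a_{j'}$, so exactly one $\a$ survives and $x$ is a scalar multiple of $1\ot e^{\lambda+\a}$, as claimed.

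The point needing the most care is the identification used above, that the half-integer modes $p(\a_i)(n-\frac12)$ genuinely span positive imaginary root spaces of $\cg^\sigma$. This depends on the normalization of $\delta$ in the present $s^{i/2}$-grading, under which $\delta$ has $d$-degree $\frac12$: the even multiples $2k\delta$ are carried by the integer modes and the odd multiples $(2k-1)\delta$ by the half-integer modes. Granting this matching of $\bigoplus_{m>0}\cg^\sigma_{m\delta}$ with the two families of Heisenberg modes (each of rank $l$, consistent with the imaginary root multiplicities of $A_{2l}^{(2)}$), the Fock-space vanishing and the weight computation are routine.
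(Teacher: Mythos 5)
Your proof is correct and follows essentially the same route as the paper: the paper's (one-line) proof likewise rests on the fact that a highest weight vector must be killed by the positive-mode Heisenberg operators $\a_i(n)$ and $p(\a_i)\left(n+\frac{1}{2}\right)$, whose joint kernel in $S(\widehat{H^-})$ is $\CC\cdot 1$. Your write-up merely makes explicit the two steps the paper leaves implicit --- that $p(\a_1),\dots,p(\a_l)$ span $H_M$ so the annihilation operators exhaust the Heisenberg algebra, and that the weight-vector property under the zero modes $b(0)$ singles out one exponential $e^{\lambda+\a}$.
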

\begin{proof} It is clear since $x$ must be commutative
with $\a_i(n)$ and $p(\a_i)(n+\frac{1}{2})$ for any $n\in\ZZ$ and
$i=1,\cdots,l$.
\end{proof}

\begin{lemm}
$1\otimes e^\lambda$ is a highest weight vector.
\end{lemm}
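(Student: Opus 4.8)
The plan is to verify directly that $1\ot e^\lambda$ is a weight vector annihilated by the positive part $N_+$ of the twisted affine algebra. Since $N_+$ is generated as a Lie algebra by the Chevalley raising generators $e_0,e_1,\dots,e_l$ attached to the simple roots $\a_0,\a_1,\dots,\a_l$, it suffices to check $e_i\cdot(1\ot e^\lambda)=0$ for each $i$: in $U(N_+)$ every element of positive weight is a polynomial in the $e_i$ with no constant term, so its rightmost factor kills $1\ot e^\lambda$, whence $N_+\cdot(1\ot e^\lambda)=0$. By the preceding lemma the only possible highest weight vectors have the shape $1\ot e^{\lambda+\a}$, so exhibiting $\a=0$ completes the identification.

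That $1\ot e^\lambda$ is a weight vector for $\ch^\sigma=\ch_0\oplus\CC c\oplus\CC d$ is immediate from the module axioms: $b(0)$ acts by the scalar $(b,\lambda)$, $c$ by $\id$, and $d=-d_0$ by $\deg(1\ot e^\lambda)=-\tfrac12(\lambda,\lambda)$. Under $\pi$ the finite raising generators are $e_i=X_0(\a_i)$ for $i=1,\dots,l$, while the affine node, whose root is $\a_0=\delta-2\theta$ with $\theta=\a_1+\cdots+\a_l\in\dot\Delta_S$ and $2\theta\in\Delta_L$, is realized as $e_0=X_{1/2}(-2\theta)$. To evaluate these I compute the action of $X(\a,z)$ on $1\ot e^\lambda$. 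All positive modes---the operators $\a(n)$ and $p(\a)(n-\tfrac12)$ with $n\ge 1$ occurring in $E^+(\a,z)$ and $F^+(p(\a),z)$---act as derivations annihilating the vacuum $1$, and $\ep_\a,z^{2\a}$ contribute only scalars. Hence
\begin{equation*}
X(\a,z)\cdot(1\ot e^\lambda)=(\pm1\text{ or }\pm\sqrt{-1})\,z^{(\a,\a)+2(\a,\lambda)}\,E^-(\a,z)F^-(p(\a),z)\cdot(1\ot e^{\a+\lambda}),
\end{equation*}
and since the creation series $E^-F^-$ begins with the constant term $1$, the lowest power of $z$ occurring is exactly $z^{(\a,\a)+2(\a,\lambda)}$.

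It remains to insert the numerology $(\a_i,\lambda)=\tfrac14\delta_{i,l}$. For $i<l$ one has $(\a_i,\a_i)=1$ and $(\a_i,\lambda)=0$, giving lowest power $z^{1}$; for $i=l$ one has $(\a_l,\a_l)=\tfrac12$ and $(\a_l,\lambda)=\tfrac14$, again giving $z^{1/2+1/2}=z^{1}$; and for the affine node, using $p(-2\theta)=0$ (so that $Y(-2\theta,z)=E^-E^+$) together with $(-2\theta,-2\theta)=2$ and $(-2\theta,\lambda)=-\tfrac12$, the lowest power is $z^{2-1}=z^{1}$. In every case the series starts at $z^1$, so the coefficient of $z^{0}$ vanishes, i.e. $X_0(\a_i)\cdot(1\ot e^\lambda)=0$ for $i=1,\dots,l$, and (the $\Delta_L$ expansion running over odd powers) the coefficient of $z^{-1}$ vanishes, i.e. $X_{1/2}(-2\theta)\cdot(1\ot e^\lambda)=0$. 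Thus all raising generators $e_0,\dots,e_l$ annihilate $1\ot e^\lambda$, which is therefore a highest weight vector.

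The main obstacle is bookkeeping rather than analysis: one must pin down the correct vertex-operator incarnation of the affine generator $e_0$---in particular the half-integer mode forced by $2\theta\in\Delta_L$ and the correct sign of its finite part $-2\theta$---and track the cocycle factors $\ep_\a$, $(-1)^{p_0(\a)}$ and the prefactor $\sqrt{-1}$ through the product. None of these scalar or sign factors affects the conclusion, because the vanishing depends only on the lowest exponent $(\a,\a)+2(\a,\lambda)$ of $z$, which the choice $\lambda=\sum_{i}\tfrac{i}{2}\a_i$ (designed precisely so that $(\a_l,\lambda)=\tfrac14$) renders strictly positive in all three cases; indeed the same estimate shows every nonnegative mode $X_{n+\frac12}(-2\theta)$ likewise kills $1\ot e^\lambda$, so the argument is insensitive to the precise grading convention for $\delta$.
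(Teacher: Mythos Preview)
Your proof is correct and follows essentially the same approach as the paper: both arguments verify directly that the modes $X_0(\a_i)$ for $i=1,\dots,l$ and the affine raising generator attached to $-2\theta$ annihilate $1\otimes e^\lambda$ by computing the lowest power of $z$ in $X(\a,z)\cdot(1\otimes e^\lambda)$, which in each case is $z^{(\a,\a)+2(\a,\lambda)}=z^1$. Your write-up is in fact more explicit about why checking the Chevalley generators suffices and about the mode indexing; your identification of $e_0$ with $X_{1/2}(-2\theta)$ agrees with Theorem~4.3 and with the use in Theorem~6.4 (the paper's own proof of this lemma writes $X_1$, which appears to be a typo).
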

\begin{proof} Obviously, for any $0<i<l$, we have
$$X_0(\a_i)\cdot(1\otimes e^\lambda)=Y_\frac{1}{2}(\a_i)\otimes e^{\lambda+\a_i}=0,$$
and
$$X_0(\a_l)\cdot(1\otimes e^\lambda)=Y_\frac{1}{2}(\a_i)\otimes e^{\lambda+\a_l}=0.$$
Finally,
$$X_1(-2\a_1-\cdot-2\a_l)\cdot(1\otimes e^\lambda)=Y_1(-2\a_1-\cdots-2\a_l)
\otimes e^{\lambda-2\a_1-\cdots-2\a_l}=0.$$ So $1\otimes e^\lambda$
is a highest weight vector. Particularly, by a direct computation,
we know that the irreducible submodule with highest weight vector
$1\otimes e^\lambda$ has highest weight $\Lambda_l$.
\end{proof}

\begin{theo}
$V(\cq)$ is an irreducible $\cg^\sigma$-module isomorphic to
$L(\Lambda_l)$.
\end{theo}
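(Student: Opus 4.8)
The plan is to show that the submodule
$$W:=U(\cg^\sigma)\cdot(1\ot e^\lambda)$$
generated by the highest weight vector $1\ot e^\lambda$ already exhausts $V(\cq)$. Granting this, the identification $V(\cq)\cong L(\Lambda_l)$ and irreducibility follow at once. First I would record the two facts supplied above: by the complete reducibility of $V(\cq)$ we may write $V(\cq)=W\oplus W'$ for some complementary (hence again completely reducible) submodule $W'$, and by the lemma exhibiting $1\ot e^\lambda$ as a highest weight vector, $W$ is the irreducible integrable module of highest weight $\Lambda_l$, i.e. $W\cong L(\Lambda_l)$. Thus the whole problem reduces to proving $W'=0$.

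The crucial step is to show that $1\ot e^{\lambda+\a}\in W$ for \emph{every} $\a\in\cq$. I would obtain these lattice translates by applying the root operators to $1\ot e^\lambda$ and isolating their lowest Laurent modes. For a simple root $\gamma=\a_i$ with $i<l$ one has $\gamma\in\Delta_M$, and for $\gamma=\a_l$ one has $\gamma\in\Delta_S$, so in each case $X(\gamma,z)$ is defined. Acting on the Fock vacuum $1$, the positive-mode exponentials $E^+(\gamma,z)$ and $F^+(\gamma,z)$ reduce to the identity, since their modes $\gamma(n)$ and $p(\gamma)(n{-}\frac12)$ with $n>0$ annihilate $1$; meanwhile $E^-(\gamma,z)$ and $F^-(\gamma,z)$ contribute $1$ at the lowest power of $z$, and the factors $e^\gamma$, $z^{(\gamma,\gamma)}z^{2\gamma}$, $\ep_\gamma$, $(-1)^{-p_0(\gamma)}$ (together with the possible $\sqrt{-1}$ in $Y(\gamma,z)$) merely shift the lattice label from $\a$ to $\a+\gamma$ and multiply by a \emph{nonzero} scalar. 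Consequently the lowest Laurent coefficient $X_\frac{n_0}{2}(\gamma)$ sends $1\ot e^{\lambda+\a}$ to a nonzero multiple of $1\ot e^{\lambda+\a+\gamma}$. Since $\a_1,\dots,\a_l$ generate $\cq$ as a lattice, iterating these operators (and their analogues for $-\gamma$) starting from $1\ot e^\lambda\in W$ produces $1\ot e^{\lambda+\a}\in W$ for all $\a\in\cq$.

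Finally I would combine these ingredients. If $W'\ne0$, then, being completely reducible, $W'$ contains an irreducible summand and hence a highest weight vector $v$ of $V(\cq)$; by the characterization of highest weight vectors, $v=1\ot e^{\lambda+\a}$ for some $\a\in\cq$. But the crucial step shows $1\ot e^{\lambda+\a}\in W$, so $v\in W\cap W'=0$, contradicting $v\ne0$. Therefore $W'=0$ and $V(\cq)=W\cong L(\Lambda_l)$ is irreducible. I expect the main obstacle to lie entirely in the crucial step: verifying, for each relevant simple root $\gamma$, that the lowest mode of $X(\gamma,z)$ indeed carries $1\ot e^{\lambda+\a}$ to a nonzero multiple of $1\ot e^{\lambda+\a+\gamma}$, which requires careful bookkeeping of the power of $z$ at which the normally ordered exponentials first contribute and confirmation that none of the accompanying sign factors $\ep(\gamma,\a)$, $(-1)^{-p_0(\gamma)}$, or the $\sqrt{-1}$ can vanish. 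Notably, no separate argument generating the full Fock space $S(\widehat{H^-})$ is needed, since the complete reducibility packaging lets the single fact ``all lattice translates lie in $W$'' force $W'=0$.
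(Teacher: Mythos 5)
Your proposal is correct, but it takes a genuinely different route from the paper's. The paper also builds on Lemmas 6.1--6.3, but instead of showing that the cyclic submodule $W=U(\cg^\sigma)\cdot(1\ot e^\lambda)$ absorbs every candidate highest weight vector, it shows that no other candidate exists: for a highest weight vector $x=1\ot e^{\lambda+\a}$, the two annihilation conditions $X_0(\a_i)\cdot x=0$ (for $i=1,\dots,l$) and $X_{\frac{1}{2}}(-2\a_1-\cdots-2\a_l)\cdot x=0$ translate into the inequalities $(\a,\a_i)\geq 0$ and $(\a,\a_1+\cdots+\a_l)\leq\frac{1}{4}$, which force $\a=0$ since each $(\a,\a_i)$ lies in $\frac{1}{2}\ZZ_{\geq 0}$ and the form is nondegenerate on $\cq$; uniqueness of the highest weight vector plus complete reducibility then gives irreducibility. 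The paper's argument is shorter and essentially arithmetic: it only needs to track which powers of $z$ can occur in $X(\gamma,z)(1\ot e^{\lambda+\a})$, not to iterate operators through the whole lattice. Your Frenkel--Kac-style cyclicity argument requires the heavier bookkeeping of the ``crucial step'' --- which is indeed sound, since $E^+(\gamma,z)$ and $F^+(p(\gamma),z)$ act as the identity on the Fock vacuum while $E^-$ and $F^-$ have constant term $1$, so the lowest mode of $X(\gamma,z)$ sends $1\ot e^{\lambda+\a}$ to a nonzero multiple of $1\ot e^{\lambda+\a+\gamma}$ --- but it buys a constructive statement (explicit generation of all lattice translates from the vacuum) and avoids having to discover the two particular annihilation conditions whose combination pins down $\a=0$. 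Note that both proofs lean equally on Lemma 6.1 and on the standard fact that, because the degree operator is bounded and weight spaces are finite-dimensional, every nonzero submodule (in particular every irreducible summand of your $W'$) contains a highest weight vector; neither argument is self-contained on that point.
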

\begin{proof} If $x=1\otimes e^{\lambda+\a}$ is a highest weight vector,
then for $i=1,\cdots,l$, by
$$X_0(\a_i)\cdot(x)=0,$$
we have
$$(\a,\a_i)>-\frac{1}{2},$$
that is to say,
$$(\a,\a_i)\geq 0.$$
Secondly, by
$$X_\frac{1}{2}(-2\a_1-\cdots-2\a_l)\cdot(x)=0,$$
we have
$$(\a,\a_1+\cdots+\a_l)\leq \frac{1}{4}.$$
Then $\a=0$. Thus we have proved that $1\otimes e^\lambda$ is the
unique highest weight vector (up to a scalar). That's, $V(\cq)$ is
irreducible.
\end{proof}

\vskip30pt
\def\refname{\centerline{\bf REFERENCES}}

\end{document}